		\newtheoremstyle{normalstyle}
			{.5em}
			{3pt}
			{\normalfont\itshape}
			{}
			{\normalfont\bfseries}
			{.\newline}
			{.5em}
			{}
		\theoremstyle{normalstyle}
		\newtheorem{thm}{Theorem}[section]
		\newtheorem{lem}[thm]{Lemma}
		\newtheorem{cor}[thm]{Corollary}
                \theoremstyle{remark}
                \newtheorem{rem}[thm]{Remark}
                \theoremstyle{definition}
		\lstdefinelanguage{XMLSchema}
					{morekeywords={schema,element,annotation,appinfo,complexType,simpleType,choice,all,sequence},		
			sensitive=true,
			morestring=[b]",
		}
		\lstdefinelanguage{ASN1}
			{morekeywords={},		
			sensitive=true,
			morestring=[b]",
		}
		\renewcommand{\leq}		{\leqslant}
		\renewcommand{\geq}		{\geqslant}
		\renewcommand{\epsilon}	{\varepsilon}
\newcommand{\comment}[1]{} 
\newcommand{\ds}{\displaystyle}
\newcommand{\tr}{\operatorname{tr}}
\newcommand{\var}{\operatorname{var}}
\newcommand{\spann}{\operatorname{span}}
\newcommand{\Ord}{\mathcal{O}}
\newcommand{\Nn}{{\mathbb N}}
\newcommand{\Rr}{{\mathbb R}}
\newcommand{\Cc}{{\mathbb C}}
\newcommand{\vph}{\varphi}
\newcommand{\eps}{\varepsilon}
\newcommand{\Jj}{\mathbf{J}}
\newcommand{\ccc}{\mathbf{c}}
\newcommand{\ddd}{\mathbf{d}}
\newcommand{\Szwei}{\mathbb{S}_2}
\newcommand{\Pin}{\Pi_n}
\newcommand{\Pinm}{\Pi_{n}^m}
\newcommand{\Dx}[1]{\textrm{d}#1}
\newcommand{\MCos}[1]{\mathrm{M}_{\cos\theta} #1}
\newcommand{\Pnm}[1]{\mathrm{P}_n^m #1}
\newcommand{\Pnn}[1]{\mathrm{P}_n^0 #1}
\newcommand{\Nk}{N_k}
\newcommand{\Trm}{\mathrm{T}}
\newcommand{\expa}{^{(\alpha)}}
\def\be{\begin{equation}}
\def\ee{\end{equation}}
\def\ba{\begin{align*}}
\def\ea{\end{align*}}
\begin{document}

\title{An alternative to Slepian functions on the unit sphere - A space-frequency analysis based on localized spherical polynomials}
\author{Wolfgang Erb\thanks{Institute of Mathematics, University of Lübeck,
Ratzeburger Allee 160, 23562 Lübeck, Germany. erb@math.uni-luebeck.de}, \hspace{3cm} Sonja Mathias}

\date{15.05.2013}

\maketitle

\begin{abstract}
In this article, we present a space-frequency theory for spherical harmonics based on the spectral 
decomposition of a particular space-frequency operator. 
The presented theory is closely linked to the theory of ultraspherical polynomials on the one hand, 
and to the theory of Slepian functions on the $2$-sphere on the other. Results from both theories 
are used to prove localization and approximation properties of the new band-limited yet space-localized basis.
Moreover, particular weak limits related to the structure of the spherical harmonics provide information on the 
proportion of basis functions needed to approximate localized functions. Finally, a scheme for the fast
computation of the coefficients in the new localized basis is provided.

\end{abstract}
{\bf AMS Subject Classification}(2010): 42B05, 42C10, 45C05, 47B36 \\[0.5cm]
{\bf Keywords: Space-frequency analysis on the unit sphere, Slepian functions, spherical harmonics, ultraspherical polynomials, Jacobi matrices}

\section{Introduction}
\label{sec:introduction}


The roots of the space-frequency analysis studied in this article trace back to the works of Landau, Pollak and Slepian in the early 1960s 
(\citep{LandauPollak1961}, \citep{LandauPollak1962}, \citep{LandauWidom1980}, \citep{Slepian1964}, \citep{Slepian1978}, \citep{SlepianPollak1961}).
They considered band-limited functions on $\Rr$ that have maximal $L^2$-energy inside a given interval.
This optimization problem led them to the investigation of a particular integral equation with the
so-called prolate spheroidal wave functions as optimally space-concentrated eigenfunctions. These satisfy a series of remarkable analytic properties. Among others,
they form an orthogonal basis for the space of band-limited functions and emerge as solutions of the Helmholtz equation in prolate spheroidal coordinates.  

If the underlying domain is the unit sphere $\Szwei$, an analogous solution of the spatio-spectral concentration problem
was realized in \citep{GruenbaumLonghiPerlstadt1982}. The generalizations of the prolate spheroidal wave functions on $\Szwei$
were later on called Slepian functions and investigated thoroughly in several articles (see \citep{GruenbaumLonghiPerlstadt1982}, \citep{Simons2012},\citep{SimonsDahlenWieczorek2006}
and the references therein). The applications of the Slepian functions vary between spatio-spectral problems in geophysics (\citep{AlbertellaSansoSneeuw1999}, \citep{SimonsDahlenWieczorek2006}), 
planetary sciences (\citep{WieczorekSimons2005}) and medical imaging (\citep{Polyakov}). 

This article will consider an alternative approach to obtain a space-localized basis for spherical polynomials based on the idea of a 
preceding paper \citep{erb2013}. In \citep{erb2013}, a time-frequency analysis for orthogonal polynomials on the interval $[-1,1]$ based 
on the spectral decomposition of a particular time-frequency operator was studied. In the weighted Hilbert space $L^2([-1,1],w)$, this 
operator was defined as the composition $\Pnm{} \mathrm{M}_x\Pnm{}$ of a projection operator $\Pnm{}$ and a multiplication
operator $\mathrm{M}_x$. Using the orthogonal polynomials with respect to the weight function $w$, it was possible to show the unitary 
equivalence of $\Pnm{}\mathrm{M}_x\Pnm{}$ with the Jacobi matrix $\Jj_n^m$ of the orthonormal polynomial sequence. This connection made 
it possible to get explicit expressions for the eigenfunctions as well as to study their localization and approximation properties.

The idea of constructing a space-localized orthogonal basis as the eigenfunctions of a space-frequency operator linked to particular orthogonal polynomials
is now transferred to the setting of the unit sphere $\Szwei$. As a spherical analogue of the time-frequency operator 
in the one-dimensional polynomial setting we consider now the space-frequency operator $\Pnm{}\MCos{}\Pnm{}$, where
$\Pnm{}$ denotes the projection onto a space $\Pi_n^m$ of spherical harmonics and $\MCos{}$ the multiplication with $\cos \theta$. Here, $\Pnm{}$ plays
the role of a band-limiting operator, whereas the aim of $\MCos{}$ is to measure the space-localization of a function $f$ with respect to the geodetic distance $\theta$ from 
a particular point on the $2$-sphere (without loss of generality we will assume that this point is the north pole).
In Section \ref{sec:timefrequency}, we will further motivate the choice of the multiplication operator $\MCos{}$ and show that it is linked to a well-known 
uncertainty principle on the unit sphere (\citep{Erb2010, GohGoodman2004-2, NarcovichWard1996, RoeslerVoit1997}).

Using the multiplication operator $\MCos{}$ as a space-localization operator instead of a projection operator as in the Landau-Pollak-Slepian theory leads to some 
differences in the resulting space-frequency analysis. In order to compute the Slepian functions on the unit sphere efficiently, a second order differential operator
is needed that commutes with the respective space-frequency operator (see \citep{GruenbaumLonghiPerlstadt1982}, \citep{SimonsDahlenWieczorek2006}). 
In the space-frequency theory given in this paper such a differential operator is no longer needed. In Section \ref{sec:timefrequency}, 
it will turn out that the space-frequency operator $\Pnm{}\MCos{}\Pnm{}$ is unitarily equivalent to a tridiagonal block diagonal matrix consisting
of Jacobi matrices related to associated ultraspherical polynomials. This particular simple structure makes it possible to compute the eigenfunctions of the
operator $\Pnm{}\MCos{}\Pnm{}$ very efficiently. Moreover, due to the connection of the operator to the ultraspherical polynomials, it is possible to derive
a series of analytic properties for the spectrum as well as for the eigenfunctions of $\Pnm{}\MCos{}\Pnm{}$.  

In the Landau-Pollak-Slepian theory, the eigenvalues of the space-frequency operator indicate whether the corresponding eigenfunction is concentrated
in the examined sub-domain of $\Szwei$ (in this case the eigenvalue is close to one) or not (the eigenvalue is close to zero). The eigenvalues of
the space-frequency operator $\Pnm{}\MCos{}\Pnm{}$ examined in this article provide a different information on the space-localization of the eigenfunctions. They give a measure on
the mean geodetic distance from the north pole at which the corresponding eigenfunction is localized on $\Szwei$. In Section \ref{sec:eigenfunctions}, it will further
turn out that the eigenvalues are asymptotically uniformly distributed on $[-1,1]$. 

The article is structured as follows: In the next section, the necessary preliminaries concerning orthogonal expansions on the unit sphere and ultraspherical 
polynomials are given. In Section \ref{sec:timefrequency}, the space-frequency operator $\Pnm{}\MCos{}\Pnm{}$ is introduced. Its spectral 
decomposition forms the mathematical groundwork for the new space-frequency
analysis on $\Szwei$. The main result here is the spectral Theorem \ref{thm:spectraldecomposition} in which the eigenvalues and eigenfunctions of 
the space-frequency operator $\Pnm{}\MCos{}\Pnm{}$ are given explicitly. The localization and approximation properties of the eigenfunctions are studied
in Section \ref{sec:eigenfunctions}. Here, error bounds for the approximation of space-localized polynomials in $\Pi_n^m$ are given and the distribution 
of the eigenvalues is studied. In the last section, we will give some considerations regarding the computation of the coefficients in the new space-localized basis.
Due to the particular structure of the eigenfunctions and their relation to the ultraspherical polynomials, this can be done efficiently using algorithms based on the fast
Fourier transform.

\section{Preliminaries}
\label{sec:basics}
In this preliminary section, we summarize all necessary notation on spherical harmonics and orthogonal polynomials. 
A general overview on spherical harmonics and approximation theory on the unit sphere can be found in the monographs \citep{DaiXu,Freeden,Michel,Mueller} and in \citep[Section 2.1]{ConradPrestin2002}. 

On the unit sphere $\mathbb{S}_{2} := \lbrace \textbf{x} \in \mathbb{R}^3 \,: \, \Vert \textbf{x} \Vert_2 = 1 \rbrace$, every point $\textbf{x} \in \mathbb{S}_{2}$
can be written in spherical coordinates as
\begin{equation} \label{eq:conversion-cart-coord}
	\textbf{x} = (x_1, x_2, x_3) = (\sin\theta\cos\varphi,\sin\theta\sin\varphi,\cos\theta)
\end{equation}
where $\theta \in [0,\pi] $ denotes the polar angle and $\varphi \in [0,2\pi)$ the azimuth angle. 
The space of square-integrable functions on $\Szwei$ is defined as
\begin{equation}
\label{eq:def-L2}
	L^2(\mathbb{S}_2) := \left\{ f:\mathbb{S}_2 \rightarrow \mathbb{C} \textrm{ } \vert \left(\int_{\mathbb{S}_2} |f(\textbf{x})|^2\Dx{\omega(\textbf{x})}\right)^\frac{1}{2} < \infty \right\},
\end{equation}
where $\Dx{\omega(\textbf{x})}$ denotes the scalar surface element on $\Szwei$. In spherical coordinates, it can be written as $\Dx{\omega(\textbf{x})} = \sin\theta \,\Dx{\theta} \,\Dx{\varphi}$.
The inner product
\begin{equation}
	\label{eq:innerproduct}
	\langle f,g\rangle := \frac{1}{4\pi} \int_{\mathbb{S}_2} f(\textbf{x}) \overline{g(\textbf{x})} \, \Dx{\omega(\textbf{x})} = \frac{1}{4\pi} \int_0^{2\pi} \int_0^{\pi} f(\theta, \varphi) \overline{g(\theta, \varphi)} \sin\theta \, \Dx{\theta} \, \Dx{\varphi}
\end{equation}
turns $L^2(\mathbb{S}_2)$ into a Hilbert space. This space can be decomposed as $L^2(\Szwei) = \bigoplus_{l=0}^\infty \textrm{Harm}_l$, where $\textrm{Harm}_l$ denotes the $2l+1$ dimensional space 
spanned by the spherical harmonics $Y_l^k$, $-l \leq k \leq l$, of order $l \in \Nn_0$.  They can be written explicitly in spherical coordinates as
\begin{equation}
\label{eq:expr-sphericalharmonics}
	 Y_l^k(\theta, \varphi) := \sin^{|k|}\theta \; p_{l-|k|}^{(|k|)}(\cos\theta ) \; \textrm{e}^{ik\varphi}.
\end{equation}
Here, the polynomials $p_l^{(\alpha)}(x)$ denote the ultraspherical polynomials of degree $l$ with positive leading coefficient and orthonormal on $[-1,1]$ with respect to the inner product
	\begin{equation} 
	\label{eq:innerproduct-ultra}
	\langle f,g \rangle^{(\alpha)} := \frac{1}{2} \int_{-1}^1 f(x) \overline{g( x)} (1-x^2)^{\alpha} \Dx{x}.
	\end{equation}
For a detailed treatise on ultraspherical polynomials in the context of general orthogonal polynomials, we refer to \cite[Chapter 5]{Chihara}, \cite[Section 1.3.2]{Gautschi}, \cite[Chapter 4]{Ismail} and \cite[Chapter 4.7]{Szegoe}.

The orthonormal polynomials $p_l^{(\alpha)}$ satisfy the three-term recurrence relation
	\begin{align} 
	\label{eq:recursionorthonormal}
		b_{l+1}^{(\alpha)} p_{l+1}^{(\alpha)}(x) &= x p_l^{(\alpha)}(x) - 				b_l^{(\alpha)} p_{l-1}^{(\alpha)}(x), \quad l=0,1,2, \ldots \\
 		p_{-1}^{(\alpha)}(x) &= 0, \qquad p_0^{(\alpha)}(x) = \frac{1}{b_0^{(\alpha)}}, \notag
	\end{align}
	with the coefficients $b_l^{(\alpha)} = \left(\frac{l(l+2\alpha)}{(2l+2\alpha+1)(2l+2\alpha-1)}\right)^{\frac{1}{2}} $, $l>0$, and $b_0\expa = \left(\frac{\sqrt{\pi}}{2}\frac{\Gamma(\alpha+1)}{\Gamma(\alpha+\frac{3}{2})} \right)^{\frac{1}{2}} $.

Based on the three-term recurrence relation \eqref{eq:recursionorthonormal}, one observes that
	\begin{equation} 
	\label{eq:jacobian3t}
		\Jj(\alpha)_l \textbf{v}_{\alpha}(x) = x \textbf{v}_{\alpha}(x) - \begin{pmatrix}
			0 \\
		 	0 \\
		 	\vdots \\
		 	b_{l+1}p_{l+1}\expa(x) \\
		\end{pmatrix} 
	\end{equation}
	holds with $ \textbf{v}_{\alpha}(x) = (p_0\expa (x), p_1\expa (x),..., p_l\expa (x) )^T$ and the Jacobi matrix $\Jj(\alpha)_l$ defined as
	\begin{equation}
	\label{eq:jacobi}
		\Jj(\alpha)_l := 	
		\begin{pmatrix}
			0 		& b_{1}^{(\alpha)} & 0 				  & 0 					& \cdots 			 	& 0 \\
			b_{1}^{(\alpha)} 	& 0 & b_{2}^{(\alpha)} & 0 					& \cdots 			 	& 0 \\
			0 					& b_{2}^{(\alpha)} & 0 & b_{3}^{(\alpha)} 	& \ddots 				& \vdots \\
			\vdots 				& \ddots 			 & \ddots			  & \ddots 				& \ddots  		 	& 0 \\
			0 					& \cdots 			 & 0				  &  b_{l-1}^{(\alpha)} & 0  & b_{l}^{(\alpha)} \\
			0 					& \cdots 			 &  \cdots       	  & 0 					& b_{l}^{(\alpha)}	& 0
		\end{pmatrix} .
	\end{equation}
	The associated ultraspherical polynomials $p_l^{(\alpha)}(x,m)$ are defined by the shifted recurrence relation 
(cf. \cite[Section 1.3.4]{Gautschi}, \cite[Section 5.7]{Ismail})
	\begin{align} 
	\label{eq:recursionassociatedsymmetric}
		b_{m+l+1}^{(\alpha)}\, p_{l+1}^{(\alpha)}(x,m) &= x \, p_l^{(\alpha)}(x,m) - b_{m+l}^{(\alpha)}\, p_{l-1}^{(\alpha)}(x,m),
		\quad l=0,1, \ldots , \\
 		p_{-1}^{(\alpha)}(x,m) &= 0, \qquad p_0^{(\alpha)}(x,m) = \frac{1}{b_m\expa}. \notag
	\end{align}
For $m =0$, the identity $p_l^{(\alpha)}(x,0) = \, p_l^{(\alpha)}(x)$ holds. In \citep[III, Section 4]{Chihara} and \cite[Section 1.3.4]{Gautschi}, the associated polynomials corresponding to an orthogonal polynomial sequence are called numerator polynomials.

For general $m \in \mathbb{N}_0$, equation \eqref{eq:jacobian3t} can be written as
\begin{equation} 
	\label{eq:generaljacobian3t}
		\Jj(\alpha)_{l+m}^m \textbf{v}_{\alpha}(x,m) = x \textbf{v}_{\alpha}(x,m) + \begin{pmatrix}
			0 \\
		 	0 \\
		 	\vdots \\
		 	b_{m+l+1}p_{l+1}\expa(x,m) \\
		\end{pmatrix} 
	\end{equation}
	with $ \textbf{v}_{\alpha}(x,m) = (p_0\expa (x,m), p_1\expa (x,m),..., p_l\expa (x,m) )^T$ and the truncated Jacobi matrix $ \Jj(\alpha)_l^m = \left(\Jj(\alpha)_l\right)_{i,j=m+1}^{l+1}$, $m \in \mathbb{N}_0$
\begin{rem} \label{rem:eigenvaluesJacobian}
	It follows from \eqref{eq:generaljacobian3t} that the eigenvalues of $ \Jj(\alpha)_{l+m}^m$ are exactly the $l+1$ roots $x_{\alpha,i}$, $1 \leq i \leq l+1$ of the associated ultraspherical polynomial $p_{l+1}\expa(x,m)$ with the eigenvectors	
\begin{equation*} \label{eq:eigenvectorsofJacobimatrix} \textbf{v}_{\alpha,i} := \textbf{v}_{\alpha}(x_{\alpha,i},m) = 
\left( p_0\expa (x_{\alpha,i},m), p_1\expa (x_{\alpha,i},m),..., p_l\expa (x_{\alpha,i},m) \right)^T .\end{equation*}
\end{rem}
Furthermore, the polynomials $p_l^{(\alpha)}(x,m)$ can be written as 
\begin{align}
	p_l^{(\alpha)}(x,m) &= \frac{1}{b_{m}^{(\alpha)} b_{m+1}^{(\alpha)} \cdots b_{m+l}^{(\alpha)}}\det \left(x \mathbf{1}_{l} - \Jj(\alpha)_{m+l-1}^m \right), \label{eq:expressionbyjacobianassociated}
\end{align}
where $\mathbf{1}_{l}$ denotes the $l$-dimensional identity matrix. This is easily seen by verifying that the right hand side of equation \eqref{eq:expressionbyjacobianassociated} satisfies the recurrence relation \eqref{eq:recursionassociatedsymmetric}.

Finally, for $\alpha \in \Nn_0$, $\alpha \leq m $ and $x \neq y$ the following Christoffel-Darboux type identity holds (\cite[Lemma 3.1]{erb2012}):
	\begin{align} \label{eq:christ-darboux2}
		\sum_{l=m}^n & p_{l-\alpha}\expa (x) \ p_{l-m}\expa (y, m-\alpha) =  \frac{p_{m-\alpha-1}\expa(x)}{x-y} \\
		& + b_{n-\alpha +1}\expa \, \frac{p_{n-\alpha+1}\expa(x)\, p_{n-m}\expa(y,m-\alpha)-p_{n-\alpha}\expa(x)\, p_{n-m+1}\expa(y,m-\alpha)}{x-y}.\notag 
	\end{align}
For $\alpha = m$ the above formula reduces to the original Christoffel-Darboux formula, see \cite[Theorem 4.5]{Chihara}.

\section{Spectral analysis of the space-frequency operator} 
\label{sec:timefrequency}

The spherical harmonics $Y_l^k$, $m \leq l \leq n$, $-l \leq k \leq l$, form an orthonormal basis for the polynomial space
\[\Pi_n^m := \bigoplus_{l=m}^{n} \textrm{Harm}_l.\] 
Due to their harmonic nature, the $L^2$-mass of the spherical harmonics $Y_l^k$ is distributed over the whole $2$-sphere $\Szwei$. 
Spherical harmonics are therefore not well suited to decompose functions with mass concentrated in a specific sub-domain of $\Szwei$. 
The aim of this section is to obtain a set of space-localized basis functions in $\Pi_n^m$. 
To this end, we introduce and examine a particular space-frequency operator for $L^2$-functions on $\Szwei$ and derive its spectral decomposition. 
This spectral decomposition is the mathematical framework for a new space-localized basis in the space $\Pi_n^m$.

In comparison to other works (cf. \citep{GruenbaumLonghiPerlstadt1982, Miranian2004, SimonsDahlenWieczorek2006}) 
dealing with spatio-spectral concentration on the unit sphere, we use the multiplication operator $\MCos{}: L^2(\mathbb{S}_2) \to L^2(\mathbb{S}_2)$, defined by 
	\begin{equation*}
		(\MCos{f})(\theta, \varphi):=\cos\theta\, f(\theta, \varphi),
	\end{equation*}
to measure the space-localization of a function $f \in L^2(\Szwei)$. Introducing the mean value
	\begin{equation}
	\label{eq:generalizedmean}
		\epsilon(f) := \langle \MCos{f},f \rangle = \frac{1}{4\pi}\int_0^{2\pi} \! \int_0^{\pi} \cos\theta \vert f(\theta,\varphi)\vert^2 \sin\theta \, \Dx{\theta} \, \Dx{\varphi},
	\end{equation}
we can visualize the role of $\MCos{}$ as a descriptor of the localization of a function $f$ at the north pole of $\Szwei$. 
For a normalized function $f$ with $\Vert f \Vert = 1$, the mean value satisfies $-1 < \epsilon(f) <1$. 
If $\eps(f)$ is close to $1$, the mass of the $L^2(\mathbb{S}_2)$-density $f$ has to be situated in the region in which $\theta$ is close to zero, 
i.e., at the north pole of $\Szwei$, in such a way that the influence of $\cos \theta$ in the integral \eqref{eq:generalizedmean} is compensated. 
On the other hand, a mean value $\eps(f)$ close to $-1$ is an indicator for a mass concentration of $f$ at the south pole of $\Szwei$.
From now on, a normalized function $f$ is called \emph{space-localized} at the north or the south pole if its mean value $\eps(f)$ is close to $1$ or $-1$, respectively. 

\begin{rem}
The particular choice of $\cos \theta$ in the multiplication operator is motivated by the particular structure of the spherical harmonics on $\Szwei$. 
Since $\cos \theta = Y_{1,0}(\theta,\varphi)$, the value $\eps(f)$ can be considered as a first spherical moment of the $L^2$ density $f$. Further,
$\cos \theta$ is used in the Fisher-von Mises distribution on $\Szwei$ to measure the distance between a point on $\Szwei$ and the north pole (corresponding
to the mean point of the distribution), see \cite[Chapter 7]{GrafarendAwange}\footnote{\label{foot:1} We are very grateful to E. Grafarend for this hint.}. \\
The mean value $\eps(f)$ is also in further ways
connected to space localization on $\Szwei$. In \citep{Erb2010, NarcovichWard1996, RoeslerVoit1997}, the variance functional 
$\var_S(f) = \frac{1-\eps(f)^2}{\eps(f)^2}$ is used to measure the space localization of a function $f$ and is an essential part of an uncertainty principle on $\Szwei$. 
\end{rem}

To measure the frequency localization of a function $f \in L^2(\Szwei)$, we consider its projection onto the finite dimensional space $\Pi_n^m$. 
The corresponding projection operator is defined as 
	\begin{equation*}
		\Pnm{f}(\theta,\vph) := \sum_{l=m}^n \sum_{k=-l}^l \langle f, Y_l^k \rangle Y_l^k(\theta,\vph).
	\end{equation*}
The operator $\Pnm{}$ is bounded, self-adjoint and, since its range is a finite dimensional space, also compact. 
In the context of the space-frequency analysis discussed in this work, a polynomial $Q \in \Pi_n^m$ is called \emph{bandlimited}.

As the main mathematical object for a space-frequency analysis on $\Szwei$, we consider the composite operator
\begin{equation*}
	\Pnm{}\MCos{}\Pnm{}.
\end{equation*}
Due to the properties of $\MCos{}$ and $\Pnm{}$, also 
$\Pnm{}\MCos{}\Pnm{}$ is a compact and self-adjoint operator on $L^2(\Szwei)$.
By the Hilbert-Schmidt theorem for compact and self-adjoint operators \citep[Theorem VI.16]{ReedSimon1}, \citep[Theorem VI.3.2]{Werner}, we have the general spectral decomposition 
\begin{equation*}
	\Pnm{}\MCos{}\Pnm{f} = \sum_{j=1}^\infty \lambda_j \langle f, e_j \rangle e_j,  \qquad f \in  L^2(\mathbb{S}_2),
\end{equation*}
with $\lambda_j \in \Rr$, $j \in \Nn$, denoting the eigenvalues of $\Pnm{}\MCos{}\Pnm{}$ and $e_j$ the corresponding eigenfunctions. \\

The Hilbert-Schmidt theorem ensures that the eigenfunctions 
$\{ e_j \}_{j \in \Nn}$ form a complete orthonormal system of $L^2(\Szwei)$. Since every function 
in $L^2(\Szwei) \setminus \Pi_n^m$ is in the kernel of $\Pnm{}\MCos{}\Pnm{}$, it suffices to consider the operator $\Pnm{}\MCos{}\Pnm{}$ restricted 
to the polynomial space $\Pi_n^m$. The problem at hand now consists in calculating the eigenvalues and the corresponding eigenfunctions of 
$\Pnm{}\MCos{}\Pnm{}$ in the space $\Pi_n^m$. This is done by analyzing the behaviour of the operator $\Pnm{}\MCos{}\Pnm{}$ in the frequency domain, 
i.e., the space spanned by the expansion coefficients corresponding to the spherical harmonics. \\

To this end, we need some further notation. First of all, we consider the expansion of $Q \in \Pi_n^m$ in the basis of the spherical harmonics, i.e.,
\begin{equation} \label{equation-expansioninsphericalharmonics}
 Q(\theta, \varphi) = \sum_{l=m}^n \sum_{k=-l}^l c_{l,k} Y_l^k(\theta,\vph).
\end{equation} 
The dimension of $\Pi_n^m$ is given by
\[N_n^m = \dim \Pi_n^m = (n+1)^2-m^2 = (n+m+1)(n-m+1).\]
Now, we sort the $N_n^m$ coefficients $c_{l,k}$ of the expansion \eqref{equation-expansioninsphericalharmonics} according to the row-index $k$, as illustrated in Figure \ref{fig:arrangement-coefficients}.
Therefore, we introduce the coefficient vectors
\begin{align*}
\textbf{c}_{k} &:= (c_{m,k}, c_{m+1,k} \ldots, c_{n,k})^T,\quad -m \leq k \leq m,\\
\textbf{c}_{k} &:= (c_{|k|,k}, c_{|k|+1,k} \ldots, c_{n,k})^T,\quad m+1 \leq |k| \leq n,
\end{align*}
and the subspaces 
\[ \Pi_{n,k}^m = \spann \{Y_{l}^k:\; l = \max(|k|,m), \ldots, n\} \]
with the dimension \[ N_k = \dim \Pi_{n,k}^m = n - \max(|k|,m)+1. \] 
Further, we introduce the transition operators
	\begin{align*}
	\Trm_k & : \Cc^{N_k} \to \Pi_{n,k}^m: \quad \Trm_k \textbf{c}_{k} = \sum_{l=m}^n c_{l,k} Y_l^k,\quad -m \leq k \leq m,\\
	\Trm_k & : \Cc^{N_k} \to \Pi_{n,k}^m: \quad \Trm_k \textbf{c}_{k} = \sum_{l=|k|}^n c_{l,k} Y_l^k,\quad m+1 \leq |k| \leq n.
	\end{align*}
Finally, we denote the complete vector of coefficients by
\begin{align*}
\textbf{c} &:= (\ccc_n, \ccc_{n-1}, \cdots, \ccc_{-n})^T \in \Cc^{N_n^m}
\end{align*}
and introduce the overall transition operator
\[
\Trm : \Cc^{N_n^m} \longmapsto \Pi_n^m, \quad \Trm \ccc := \sum_{k=-n}^n \Trm_k \ccc_k = \sum_{l=m}^n \sum_{k=-l}^l c_{l,k} Y_l^k.
\]
Clearly, the linear operator $\Trm$ maps the canonical basis of $\Cc^{N_n^m}$ onto the orthonormal basis of $\Pi_n^m$ and is therefore an unitary operator.

\begin{figure}[h]
	\begin{center}
	\begin{tikzpicture}
		[scale=1,
		fi/.style={circle, thick, fill=black!20, minimum height = 1em, anchor = center},
		eigenf/.style ={circle, minimum height=1em, anchor = center}]
		\tikzstyle{background}=[rectangle, fill=gray!20, inner sep=0cm, rounded corners=1mm]
		\tikzstyle{triangle}=[rectangle, fill=blue!30, inner sep=-0.1cm, rounded corners=1mm]

		\matrix [matrix of math nodes, 
					column sep={1cm,between origins}, 
					row sep={0.8cm,between origins},
					] (A)
		{
				& 		& 		& 		& 						& 						& |[eigenf]| c_{6,6}& \\ 
				& 		& 		& 		& 						& |[eigenf]| c_{5,5}& |[eigenf]| c_{6,5}& \\ 
				& 		& 		& 		& |[eigenf]| c_{4,4}& |[eigenf]| c_{5,4}& |[eigenf]| c_{5,6}& \\ 
				& 		& 		& |[fi]|& |[eigenf]| c_{4,3}& |[eigenf]| c_{5,3}& |[eigenf]| c_{6,3}& \\ 	
				& 		& |[fi]|& |[fi]|& |[eigenf]| c_{4,2}& |[eigenf]| c_{5,2}& |[eigenf]| c_{6,2}& \\ 
				& |[fi]|& |[fi]|& |[fi]|& |[eigenf]| c_{4,1}& |[eigenf]| c_{5,1}& |[eigenf]| c_{6,1}& \\ 		
		 |[fi]|	& |[fi]|& |[fi]|& |[fi]|& |[eigenf]| c_{4,0}& |[eigenf]| c_{5,0}& |[eigenf]| c_{6,0}& \\ 
		 		& |[fi]|& |[fi]|& |[fi]|& |[eigenf]| c_{4,\text{-}1}& |[eigenf]| c_{5,\text{-}1}& |[eigenf]| c_{6,\text{-}1}& \\ 
		 		& 		& |[fi]|& |[fi]|& |[eigenf]| c_{4,\text{-}2}& |[eigenf]| c_{5,\text{-}2}& |[eigenf]| c_{6,\text{-}2}& \\ 
		 		& 		& 		& |[fi]|& |[eigenf]| c_{4,\text{-}3}& |[eigenf]| c_{5,\text{-}3}& |[eigenf]| c_{6,\text{-}3}& \\ 
		 		& 		& 		& 		& |[eigenf]| c_{4,\text{-}4}& |[eigenf]| c_{5,\text{-}4}& |[eigenf]| c_{6,\text{-}4}& \\ 
		 		& 		& 		& 		&  						& |[eigenf]| c_{5,\text{-}5}& |[eigenf]| c_{6,\text{-}5}& \\ 
		 		& 		& 		& 		&  						& 						& |[eigenf]|c_{6,\text{-}6}& \\ 
		};
                \node [left=60pt, label=left:{$\Pi_6^4$}] at (A-1-7) {};
		\node (x) [below=160pt, label=below:{l = 0}] at (A-7-1) {};
		\node (y) [below=160pt, label=below:{l = n = 6}] at (A-7-7) {};
		\node (z) [below=160pt, label=below:{l = m = 4}] at (A-7-5) {};
		\draw (x.west) -- (y.east);
		\draw (x.north) -- (x.south);
		\draw (y.north) -- (y.south);
		\draw (z.north) -- (z.south);
		
		\node (a) [right=40pt] at (A-1-7) {};
		\node (b) [right=40pt] at (A-13-7) {};
		\node (c) [right=40pt, label=right:{k = 0}] at (A-7-7) {};
		\node (d) [right=40pt, label=right:{k = m = 4}] at (A-3-7) {};
		\node (e) [right=40pt, label=right:{k = -m = -4}] at (A-11-7) {};
		\node (f) [right=50pt, label=right:{$\ccc_2$}] at (A-5-7) {};
		\node (g) [right=50pt, label=right:{$\ccc_{\text{-}5}$}] at (A-12-7) {};
		\draw (a.north) -- (b.south);
		\draw (c.west) -- (c.east);
		\draw (d.west) -- (d.east);
		\draw (e.west) -- (e.east);

		\begin{pgfonlayer}{background}
        	\node [background, fit=(A-1-7) (A-13-7)] {};
		\node [background, fit=(A-2-6) (A-12-6)] {};
		\node [background, fit=(A-3-5) (A-11-5)] {};
                \node [triangle, fit=(A-5-5) (A-5-7)] {};
                \node [triangle, fit=(A-12-6) (A-12-7)] {};
                \node [background, left=78pt, fit = (A-1-7)]  {};
                \node [triangle, right=68pt, fit = (A-5-7)]  {};
                \node [triangle, right=68pt, fit = (A-12-7)]  {};
    	        \end{pgfonlayer}
							
	\end{tikzpicture}
	\caption[Visualization of the arrangement of the coefficients]{Visualization of the arrangement of the coefficients $c_{l,k}$ according to the structure of $\Pi_6^4$}
	\label{fig:arrangement-coefficients}
	\end{center}
\end{figure}

Now, we are able to represent the space-frequency operator $\Pnm{}\MCos{}\Pnm$ in the space of coefficient vectors.

\begin{lem} 
\label{lem:unitaryequivalence}
The operator $\Pnm{}\MCos{}\Pnm$ restricted to the polynomial space $\Pi_n^m$ is unitarily equivalent to the block diagonal matrix $\mathcal{J}_n^m \in \Cc^{N_n^m \times N_n^m}$ given by
\begin{equation*}
	\scalebox{0.45}{
	\begin{tikzpicture} [
		one/.style={regular polygon, regular polygon sides=4, fill=gray!20, inner sep = -20pt,  minimum height = 2.5cm,  rounded corners = 1mm, font = \huge},
		two/.style={regular polygon, regular polygon sides=4, fill=gray!20, inner sep = -20pt, minimum height = 3cm,  rounded corners = 1mm, font = \huge},
		m/.style={regular polygon, regular polygon sides=4, fill=gray!20, inner sep = -20pt, minimum height = 3.5cm, rounded corners = 1mm,  font = \huge}
		]

		\matrix (A)[matrix of math nodes, %
					column sep={0.1cm}, %
					row sep={-0.05cm},%
					left delimiter  = (,%
             		right delimiter = ),%
             		anchor=center,
             		ampersand replacement=\& %
             		] at (0,0)
		{ 
			|[one]| \Jj(n)_0	\& \& \& \& \& \& \& \& \& \& \\
				\& 	|[two]|	\Jj(n\text{-}1)_1 \& \& \& \& \& \& \& \& \&\\	
				\& \& \ddots \& \& \& \& \& \& \& \& \& \&\\	
				\& \& \& |[m]| \Jj(m)_{n\text{-}m}^0 \& \& \& \& \& \& \&\\
				\& \& \& \& \ddots \& \& \& \& \& \& \\		
				\& \& \& \& \& |[m]| \Jj(0)_{n}^m \& \& \& \& \&\\
				\& \& \& \& \& \& \ddots \& \& \& \& \\		
				\& \& \& \& \& \& \&|[m]| \Jj(m)_{n\text{-}m}^0\& \& \& \\
				\& \& \& \& \& \& \& \& \ddots \& \& \\	
				\& \& \& \& \& \& \& \& \& |[two]| \Jj(n\text{-}1)_1 \& \\					
				\& \& \& \& \& \& \& \& \& \& |[one]| \Jj(n)_0 \\					
		};
		\node (a) [left=50pt, font = \Huge] at (A.west) {$\mathcal{J}_n^m = $};				
	\end{tikzpicture}
	}
\end{equation*}
where $\Jj( |k|)_n^m$ denote the Jacobi matrices of to the associated ultraspherical polynomials $p_l^{( |k|)}(x,m)$ defined in \eqref{eq:jacobi}.
More precisely, $\Trm^* (\Pnm{}\MCos{}\Pnm{}) \Trm = \mathcal{J}_n^m$ and 
\begin{align}
	 \Trm_k^{*} & ( \Pnm{} \MCos{} \Pnm )\Trm_k  = \Jj\big({ \textstyle |k|}\big)_{n-|k|}^{m-|k|}, \qquad -m \leq k \leq m, \label{eq:unitaryequivalencesmallk}\\
	 \Trm_k^{*} & ( \Pnm{} \MCos{} \Pnm )\Trm_k  = \Jj\big({ \textstyle |k|}\big)_{n-|k|}, \qquad m+1 \leq |k| \leq n. \label{eq:unitaryequivalencelargek}
\end{align}

Further, if $\ds Q = \sum_{l=m}^n \sum_{k=-l}^{l} c_{l,k} \, Y_l^k$, then
\begin{align} \label{eq:epsilonbyjacobian}
\epsilon(Q) &= \langle \Pnm \MCos \Pnm{Q}, Q\rangle \\ & = \sum_{k = -m}^{m}  \textbf{c}_{k}^H  \Jj\big({ \textstyle |k|}\big)_{n-|k|}^{m-|k|}\textbf{c}_{k} + \sum_{|k| = m+1}^{n}  \textbf{c}_{k}^H  \Jj \big({\textstyle |k|} \big)_{n-|k|} \textbf{c}_{k} = \ccc^H \mathcal{J}_n^m \ccc. \notag
\end{align}

\end{lem}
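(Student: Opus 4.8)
The plan is to use that $\Trm$ is unitary, so that it suffices to verify the single operator identity $\Trm^*(\Pnm{}\MCos{}\Pnm{})\Trm = \mathcal{J}_n^m$; once this is established the block-diagonal shape of $\mathcal{J}_n^m$ is read off from the individual blocks \eqref{eq:unitaryequivalencesmallk}--\eqref{eq:unitaryequivalencelargek}, and the expression \eqref{eq:epsilonbyjacobian} for $\epsilon(Q)$ follows at once.

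First I would show that the operator decouples across the azimuthal index $k$. Multiplication by $\cos\theta$ does not affect the angular factor $\mathrm{e}^{ik\varphi}$ in \eqref{eq:expr-sphericalharmonics}, and spherical harmonics with different $k$ are orthogonal because the $\varphi$-integral in \eqref{eq:innerproduct} vanishes; hence $\langle \MCos{Y_l^k}, Y_{l'}^{k'}\rangle = 0$ for $k \neq k'$. Therefore $\Pnm{}\MCos{}\Pnm{}$ leaves each subspace $\Pi_{n,k}^m$ invariant, which is precisely the assertion that $\Trm^*(\Pnm{}\MCos{}\Pnm{})\Trm$ is block diagonal with one block $\Trm_k^*(\Pnm{}\MCos{}\Pnm{})\Trm_k$ for each $k$. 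This fixes the overall shape of $\mathcal{J}_n^m$ and reduces the problem to identifying each block.

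The core computation is a single use of the three-term recurrence \eqref{eq:recursionorthonormal}. Writing $\MCos{Y_l^k} = \sin^{|k|}\theta\,\big(\cos\theta\, p_{l-|k|}^{(|k|)}(\cos\theta)\big)\,\mathrm{e}^{ik\varphi}$ and inserting $x=\cos\theta$, $\alpha=|k|$ into \eqref{eq:recursionorthonormal}, one obtains
\[
\MCos{Y_l^k} = b_{l-|k|+1}^{(|k|)}\, Y_{l+1}^k + b_{l-|k|}^{(|k|)}\, Y_{l-1}^k .
\]
Applying $\Pnm{}$ removes $Y_{l+1}^k$ when $l=n$ and $Y_{l-1}^k$ when $l=\max(|k|,m)$, so in the orthonormal basis $\{Y_l^k: \max(|k|,m)\leq l\leq n\}$ the $k$-block is a symmetric tridiagonal matrix with zero diagonal and off-diagonal entries $b_{l-|k|+1}^{(|k|)}$ coupling the indices $l$ and $l+1$.

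It then remains to match this block with the stated truncated Jacobi matrix, and the index bookkeeping here is the step I expect to require the most care. For $-m\leq k\leq m$ one has $\max(|k|,m)=m$, so $l$ ranges from $m$ to $n$ and the off-diagonal entries are $b_{m-|k|+1}^{(|k|)},\dots,b_{n-|k|}^{(|k|)}$; these are exactly the off-diagonals of the submatrix $\big(\Jj(|k|)_{n-|k|}\big)_{i,j=m-|k|+1}^{n-|k|+1}=\Jj(|k|)_{n-|k|}^{m-|k|}$, whose size $n-m+1$ equals $\Nk$, giving \eqref{eq:unitaryequivalencesmallk}. For $m+1\leq|k|\leq n$ one has $\max(|k|,m)=|k|$, so $l$ ranges from $|k|$ to $n$, the off-diagonals reduce to $b_1^{(|k|)},\dots,b_{n-|k|}^{(|k|)}$, which are those of the full matrix $\Jj(|k|)_{n-|k|}$ of size $n-|k|+1=\Nk$, giving \eqref{eq:unitaryequivalencelargek}. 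Assembling the blocks over all $k$ produces $\mathcal{J}_n^m$. Finally, since $Q\in\Pi_n^m$ gives $\Pnm{Q}=Q$, the definition \eqref{eq:generalizedmean} yields $\epsilon(Q)=\langle\MCos{Q},Q\rangle=\langle\Pnm{}\MCos{}\Pnm{Q},Q\rangle$, and transporting this through the unitary $\Trm$ rewrites it as $\ccc^H\mathcal{J}_n^m\ccc$, which by the block structure is the sum over $k$ in \eqref{eq:epsilonbyjacobian}.
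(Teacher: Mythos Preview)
Your proof is correct and uses the same two ingredients as the paper: the orthogonality of spherical harmonics with different azimuthal index $k$ to obtain the block-diagonal structure, and the three-term recurrence \eqref{eq:recursionorthonormal} to identify each block with a (truncated) Jacobi matrix. The organisation differs slightly. The paper first computes the quadratic form $\epsilon(Q)=\langle \Pnm{}\MCos{}\Pnm{Q},Q\rangle$ for arbitrary $Q\in\Pi_n^m$, expands the integral, applies the recurrence inside the integral, and arrives at $\ccc^H\mathcal{J}_n^m\ccc$; from this it then deduces the operator identity by invoking the uniqueness theorem for self-adjoint operators. You instead compute the action $\MCos{Y_l^k}=b_{l-|k|+1}^{(|k|)}Y_{l+1}^k+b_{l-|k|}^{(|k|)}Y_{l-1}^k$ on basis elements and read off the matrix entries directly, obtaining \eqref{eq:epsilonbyjacobian} as a corollary rather than as the starting point. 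Your route is a bit more elementary in that it avoids the appeal to the uniqueness theorem; the paper's route has the feature that the formula for $\epsilon(Q)$, which is of independent interest, is derived first and in a self-contained way.
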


\begin{proof}
	We consider an arbitrary polynomial $Q \in \Pi_n^m$.
	Since the projection operator $\Pnm{}$ is self-adjoint, we get for the mean value
	\begin{equation}
	\label{eq:linkepsilpn}
		\epsilon(Q) = \langle \MCos{} Q, Q \rangle = \langle \MCos{} \Pnm{Q}, \Pnm{Q} \rangle = \langle \Pnm{} \MCos{} \Pnm{Q}, Q \rangle.
	\end{equation}
	Now using the expansion \eqref{equation-expansioninsphericalharmonics} of $Q$ in spherical harmonics and the transition operators $\Trm_k$, we obtain
	\begin{align*}
		\langle \Pnm{} \MCos{} \Pnm{Q}, Q \rangle = & \left\langle \Pnm{} \MCos{} \Pnm \left( \sum_{j=-n}^n \Trm_j \textbf{c}_j\right) , \! \sum_{k=-n}^n \Trm_k \textbf{c}_k \right\rangle \\
		= & \sum_{k=-n}^n  \sum_{j=-n}^n \langle \Pnm{} \MCos{} \Pnm{\Trm_j \textbf{c}_j}, \Trm_k \textbf{c}_k \rangle \\
		= & \sum_{k=-n}^n \sum_{j=-n}^n \underbrace{\langle \MCos{} \Trm_j \textbf{c}_j, \Trm_k \textbf{c}_k \rangle}_{\delta_{jk}\, \eps(\Trm_k \textbf{c}_k)} = \sum_{k=-n}^n \eps(\Trm_k \textbf{c}_k) \\
		= & \sum_{k=-n}^n \langle\Pnm{} \MCos{} \Pnm{\Trm_k \textbf{c}_k}, \Trm_k \textbf{c}_k \rangle,
	\end{align*}
	where the reduction in the third line is due to the orthogonality of the spherical harmonics $Y_n^j$ and $Y_{n'}^k$ for different indices $j \neq k$.
	Thus, the operator $\Pnm{} \MCos{} \Pnm{}$ on $\Pi_n^m$ has a reducible block structure and the length of the $2n+1$ blocks is given by the number
        $N_k$ of coefficients $c_{l,k}$ in the row $k$ (see also Figure \ref{fig:arrangement-coefficients}). To determine the behaviour of $\Pnm{} \MCos{} \Pnm{}$ on the 
subspaces $\Pi_{n,k}^m$, we consider the mean values $\eps(\Trm_k \textbf{c}_k)$ in more detail. Since the lengths $N_k$ of the subblocks differ, we have to consider two different cases and start with the case $-m \leq k \leq m$. By Definition \ref{eq:expr-sphericalharmonics} of the spherical harmonics $Y_l^k$, the mean value $\eps(\Trm_k \textbf{c}_k)$ can be expressed as
\begin{align*}
\epsilon(\Trm_k & \textbf{c}_k) = \frac{1}{4\pi} \int_0^{2\pi} \int_0^{\pi} \cos \theta \, |\Trm_k \textbf{c}_k (\theta,\varphi)|^2
\sin \theta\, \Dx{\theta} \, \Dx{\varphi} \\
&= \frac{1}{4\pi} \int_{0}^{2\pi} \int_{0}^{\pi} \left( \sum_{l=m}^n c_{l,k}
\cos \theta \, \sin^{|k|}\theta \,  {p}_{l-|k|}^{(|k|)}( \cos \theta ) \textrm{e}^{i k \varphi} \right) \\
& \hspace{2cm} \cdot \left(\sum_{l=m}^n \overline{c}_{l,k}
\sin^{|k|}\theta \, {p}_{l-|k|}^{(|k|)}
 ( \cos \theta )e^{-i k \varphi } \right) \sin \theta \,\Dx{\theta} \, \Dx{\varphi}. \\
&= \frac{1}{2} \!\int_{0}^{\pi} \!\! \left( \sum_{l=m}^n \!c_{l,k} 
\cos \theta {p}_{l-|k|}^{(|k|)}( \cos \theta  )\!\! \right) \!\!
\left( \sum_{l=m}^n \!  \overline{c}_{l,k} {p}_{l-|k|}^{(|k|)} ( \cos \theta )\!\! \right) \! \sin^{2|k|+1}\! \theta \Dx{\theta}. \\
\end{align*}
Now, using the three-term recurrence relation (\ref{eq:recursionorthonormal}) and the orthonormality of the polynomials ${p}_{l}^{(|k|)}$, we can conclude
\small
\begin{align*}
\epsilon(\Trm_k \textbf{c}_k)
&= \frac{1}{2} \int_{0}^{\pi} \Bigg( \sum_{l=m}^n  c_{l,k}
 \Big(b_{l-|k|}^{(|k|)} {p}_{l-|k|-1}^{(|k|)}( \cos \theta) +  b_{l-|k|+1}^{(|k|)} {p}_{l-|k|+1}^{(|k|)}(\cos \theta ) \Big) \Bigg)\\ 
 & \qquad \qquad \qquad \qquad \cdot \Big(\sum_{l=m}^n  \overline{c}_{l,k} \, {p}_{l-|k|}^{(|k|)} (\cos \theta)\Big) \sin^{2|k|+1}\theta \, \Dx{\theta}\\
 &=  \sum_{l=m+1}^{n} c_{l,k} \, \overline{c}_{l-1,k} \,  b_{l-|k|}^{|k|} + \sum_{l=m}^{n-1} c_{l,k} \, \overline{c}_{l+1,k}\,  b_{l-|k|+1}^{|k|} = 
 \textbf{c}_{k}^H \, \Jj \big({|k|}\big)_{n-|k|}^{m-|k|} \textbf{c}_{k} .
\end{align*}
In the case $m +1 \leq |k| \leq n$, we get by an analogous argumentation
\begin{align*}
\epsilon(\Trm_k \textbf{c}_k) 
&= \frac{1}{2} \!\int_{0}^{\pi} \!\! \left( \sum_{l=|k|}^n \!c_{l,k} 
\cos \theta {p}_{l-|k|}^{(|k|)}( \cos \theta  )\!\! \right) \!\!
\left( \sum_{l=|k|}^n \!  \overline{c}_{l,k} {p}_{l-|k|}^{(|k|)} ( \cos \theta )\!\! \right) \! \sin^{2|k|+1}\! \theta \Dx{\theta}. \\
&= \frac{1}{2} \int_{0}^{\pi} \Bigg( \sum_{l=|k|}^n  c_{l,k}
 \Big(b_{l-|k|}^{(|k|)} {p}_{l-|k|-1}^{(|k|)}( \cos \theta) +  b_{l-|k|+1}^{(|k|)} {p}_{l-|k|+1}^{(|k|)}(\cos \theta ) \Big) \Bigg)\\ 
 & \qquad \qquad \qquad \qquad \cdot \Big(\sum_{l=|k|}^n  \overline{c}_{l,k} \, {p}_{l-|k|}^{(|k|)} (\cos \theta)\Big) \sin^{2|k|+1}\theta \, \Dx{\theta}\\
 &=  \sum_{l=|k|+1}^{n} c_{l,k} \, \overline{c}_{l-1,k} \,  b_{l-|k|}^{|k|} + \sum_{l=|k|}^{n-1} c_{l,k} \, \overline{c}_{l+1,k}\,  b_{l-|k|+1}^{|k|} = 
 \textbf{c}_{k}^H \, \Jj \big({|k|}\big)_{n-|k|} \textbf{c}_{k} .
\end{align*}
In total, we can conclude
\[
\langle \Pnm \MCos \Pnm{Q}, Q\rangle = \sum_{k = -m}^{m}  \textbf{c}_{k}^H  \Jj\big({ \textstyle |k|}\big)_{n-|k|}^{m-|k|}\textbf{c}_{k} + \sum_{|k| = m+1}^{n}  \textbf{c}_{k}^H  \Jj \big({\textstyle |k|} \big)_{n-|k|} \textbf{c}_{k},
\]
and, thus, that equation \eqref{eq:epsilonbyjacobian} holds for any polynomial $Q \in \Pi_n^m$. Therefore, by the uniqueness theorem for self-adjoint operators (see \cite[Theorem 12.7]{RudinFunctionalAnalysis}),
the two operators $\Trm^* (\Pnm{}\MCos{}\Pnm{}) \Trm$ and $\mathcal{J}_n^m$ coincide and for the sub-operators $\Trm_k^* (\Pnm{}\MCos{}\Pnm{}) \Trm_k$ the relations \eqref{eq:unitaryequivalencesmallk} and \eqref{eq:unitaryequivalencelargek} hold.
\end{proof}

\begin{rem}
In Lemma \ref{lem:unitaryequivalence}, only the statement about the unitary equivalence of the two operators $\Pnm{}\MCos{}\Pnm$ and $\mathcal{J}_n^m$ is new. The characterization \eqref{eq:epsilonbyjacobian} of $\eps(f)$ with help of the operator $\mathcal{J}_n^m$ is not new and already stated and proven in a generalized form in \cite[Lemma 3.26]{ErbDiss} and \cite{Fernandez2007}. For the sake of completeness, we decided to formulate the proof here in a simplified form. 
\end{rem}

Now, we are able to state the spectral decomposition of the space-frequency operator $\Pnm{}\MCos{}\Pnm{}$ explicitly. 


\begin{thm}
\label{thm:spectraldecomposition}
	The space-frequency operator $\Pnm{}\MCos{}\Pnm{}$ on $L^2(\Szwei)$ has the spectral decomposition 
	\begin{align*}
		\Pnm{}\MCos{}\Pnm{f} &= \sum_{k = -n}^{n} \sum_{i=1}^{N_k} x_{|k|,i} \langle f, \psi_{k,i} \rangle \psi_{k,i}.
	\end{align*}
	For $-m \leq k \leq m$, the eigenvalues $x_{|k|,i}$, $1 \leq i \leq n-m+1$, denote the $n-m+1$ roots of the associated polynomial $p_{n-m+1}^{(|k|)}(x,m-|k|)$ and the eigenfunctions $\psi_{k,i}
\in \Pi_n^m$ have the explicit form
	\begin{align} 
	\label{eq:slepianfunctionsexplicit1}
	\psi_{k,i}(\theta,\varphi) = \kappa_{k,i} \, b_{n-|k|+1}^{(|k|)} & \, p_{n-m}^{(|k|)}(x_{|k|,i},m-|k|)  \frac{ \sin^{|k|}\theta \, p_{n-|k|+1}^{(|k|)}(\cos \theta) }{\cos \theta- x_{|k|,i}} \, e^{ik\varphi} \notag \\ 
	& + \kappa_{k,i} \, \frac{\sin^{|k|}\theta \, p_{m-|k|-1}^{(|k|)}(\cos \theta)}{\cos \theta - x_{|k|,i}} \, e^{ik\varphi},
	\end{align}
	with the normalizing constant
	\begin{equation} 
	\label{eq:normalizingconstant1}
		\kappa_{k,i} := \Big( \sum_{l=m}^{n} |p_{l-m}^{(|k|)}(x_{|k|,i},m-|k|)|^2 \Big)^{-\frac{1}{2}}.
	\end{equation}
	For $m+1 \leq |k| \leq n$, the eigenvalues $x_{|k|,i}$, $1 \leq i \leq n-|k|+1$, correspond to the $n-|k|+1$ roots of the polynomials $p_{n-|k|+1}^{(|k|)}(x)$ and the eigenfunctions 
        $\psi_{k,i}\in \Pi_n^m$ can be written as
	\begin{equation} 
	\label{eq:slepianfunctionsexplicit2}
	\psi_{k,i}(\theta,\varphi) = \kappa_{k,i} \, b_{n-|k|+1}^{(|k|)} \, p_{n-|k|}^{(|k|)}(x_{|k|,i}) \frac{  \sin^{|k|}\theta \, p_{n-|k|+1}^{(|k|)}(\cos \theta)}{\cos \theta - x_{|k|,i}} \, e^{ik\varphi},
	\end{equation}
	with the normalizing constant
	\begin{equation} 
	\label{eq:normalizingconstant2}
		\kappa_{k,i} := \Big( \sum_{l=|k|}^{n} |p_{l-|k|}^{(|k|)}(x_{|k|,i})|^2 \Big)^{-\frac{1}{2}}.
	\end{equation}
\end{thm}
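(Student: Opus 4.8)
The plan is to build the spectral decomposition directly on top of the unitary equivalence of Lemma \ref{lem:unitaryequivalence}. Since $\Trm^*(\Pnm{}\MCos{}\Pnm{})\Trm = \mathcal{J}_n^m$ with $\Trm$ unitary, every eigenpair of the operator on $\Pi_n^m$ is the $\Trm$-image of an eigenpair of the block diagonal matrix $\mathcal{J}_n^m$. Because the blocks act on pairwise disjoint groups of coefficients, it suffices to diagonalize each single block $\Jj(|k|)_{n-|k|}^{m-|k|}$ for $-m \le k \le m$ and $\Jj(|k|)_{n-|k|}$ for $m+1 \le |k| \le n$ separately, and then transport the resulting eigenvectors back into $\Pi_n^m$ through the isometries $\Trm_k$.

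For the eigenvalues and eigenvectors of an individual block I would invoke Remark \ref{rem:eigenvaluesJacobian}. In the first case, setting $\alpha = |k|$ and matching $\Jj(|k|)_{n-|k|}^{m-|k|}$ with $\Jj(\alpha)_{l+m'}^{m'}$ via $m' = m-|k|$ and $l = n-m$, the remark gives that the eigenvalues are exactly the $n-m+1$ roots $x_{|k|,i}$ of $p_{n-m+1}^{(|k|)}(x,m-|k|)$, with eigenvector $\big(p_0^{(|k|)}(x_{|k|,i},m-|k|),\ldots,p_{n-m}^{(|k|)}(x_{|k|,i},m-|k|)\big)^T$. Applying $\Trm_k$ and inserting the explicit shape \eqref{eq:expr-sphericalharmonics} of $Y_l^k$ turns this vector into
\[
\psi_{k,i} \propto \sin^{|k|}\theta\, e^{ik\varphi} \sum_{l=m}^n p_{l-m}^{(|k|)}(x_{|k|,i},m-|k|)\, p_{l-|k|}^{(|k|)}(\cos\theta).
\]
The case $m+1 \le |k| \le n$ is entirely analogous, using $p_l^{(|k|)}(x) = p_l^{(|k|)}(x,0)$, the roots of $p_{n-|k|+1}^{(|k|)}(x)$ as eigenvalues, and the corresponding sum $\sum_{l=|k|}^n p_{l-|k|}^{(|k|)}(x_{|k|,i})\,p_{l-|k|}^{(|k|)}(\cos\theta)$.

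The decisive step is to collapse these finite sums into the closed forms \eqref{eq:slepianfunctionsexplicit1} and \eqref{eq:slepianfunctionsexplicit2}. For $-m \le k \le m$ the sum above is precisely the left-hand side of the Christoffel--Darboux type identity \eqref{eq:christ-darboux2} evaluated at $\alpha = |k| \le m$, $x = \cos\theta$, $y = x_{|k|,i}$ (legitimate whenever $\cos\theta \neq x_{|k|,i}$). Because $x_{|k|,i}$ is a root of $p_{n-m+1}^{(|k|)}(x,m-|k|)$, the numerator term on the right-hand side that carries the factor $p_{n-m+1}^{(|k|)}(y,m-|k|)$ vanishes, leaving exactly the two surviving summands that make up \eqref{eq:slepianfunctionsexplicit1}. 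For $m+1 \le |k| \le n$ I would instead apply the ordinary Christoffel--Darboux formula (the reduction of \eqref{eq:christ-darboux2} noted after that equation, cf. \cite[Theorem 4.5]{Chihara}); the root property of $x_{|k|,i}$ again annihilates one numerator term and reproduces \eqref{eq:slepianfunctionsexplicit2}.

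Finally, the normalizing constants follow from the fact that each $\Trm_k$ is an isometry onto $\Pi_{n,k}^m$, so $\Vert \psi_{k,i}\Vert$ equals the Euclidean norm of the underlying Jacobi eigenvector; dividing by that norm yields \eqref{eq:normalizingconstant1} and \eqref{eq:normalizingconstant2}. Ranging over all $k$ and $i$ and using that the operators $\Trm_k$ have mutually orthogonal ranges then assembles the complete orthonormal eigensystem, and hence the claimed spectral decomposition. I expect the genuine obstacle to be purely technical: carefully tracking the shifted indices $n-|k|$ and $m-|k|$ when matching each block to Remark \ref{rem:eigenvaluesJacobian}, and checking that the surviving right-hand side of \eqref{eq:christ-darboux2} reproduces \eqref{eq:slepianfunctionsexplicit1} summand by summand.
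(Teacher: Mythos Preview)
Your proposal is correct and follows essentially the same route as the paper's own proof: use the unitary equivalence of Lemma~\ref{lem:unitaryequivalence} to reduce to the block Jacobi matrices, read off the eigenpairs via Remark~\ref{rem:eigenvaluesJacobian}, push the eigenvectors back through $\Trm_k$, and collapse the resulting finite sums with the Christoffel--Darboux identity~\eqref{eq:christ-darboux2} (the root condition killing the unwanted numerator term), then normalize using that $\Trm_k$ is an isometry. The paper proceeds in exactly this order with the same index matching and the same two cases.
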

\begin{proof}
        By Lemma \ref{lem:unitaryequivalence}, the operators $\Pnm{} \MCos{} \Pnm{}$ and $\mathcal{J}_n^m$ are unitarily equivalent and, thus, exhibit the same spectrum. In particular,
	the spectrum of $\Pnm{} \MCos{} \Pnm{}$ is composed of the eigenvalues of the Jacobi matrices $\Jj\big({ \textstyle |k|}\big)_{n-|k|}^{m-|k|}$, $0 \leq |k| \leq m$, and
        $\Jj\big({ \textstyle |k|}\big)_{n-|k|}$, $m+1 \leq |k| \leq n$. 

        To determine the single eigenfunctions, we consider first the case $-m\leq k \leq m$. If $\textbf{v}_{|k|,i}$ is an eigenvector of $\Jj\big({ \textstyle |k|}\big)_{n-|k|}^{m-|k|}$ corresponding to the eigenvalue $x_{|k|,i}$, then $\Trm_k \, \textbf{v}_{|k|,i}$ is an eigenfunction of $\Pnm{} \MCos{} \Pnm{}$, since
	\begin{align*}
		\Pnm{} \MCos{} \Pnm{} \,  \Trm_k \textbf{v}_{|k|,i}  & = \Trm_k \Trm_k^* \, \Pnm{} \MCos{} \Pnm{} \, \Trm_k \, \textbf{v}_{|k|,i}\\
		& = \Trm_k \, \Jj\big({ \textstyle |k|}\big)_{n-|k|}^{m-|k|}  \textbf{v}_{|k|,i} =x_{|k|,i} \, \Trm_k \, \textbf{v}_{|k|,i}.			
	\end{align*}
	By Remark \ref{rem:eigenvaluesJacobian}, the eigenvalues of $\Jj\big({ \textstyle |k|}\big)_{n-|k|}^{m-|k|}$ are exactly the $n-m+1$ roots $x_{|k|,i}$, $ i = 1, \cdots, n-m+1$, of the associated polynomial $p_{n-m+1}^{(|k|)}(x,m-|k|)$ with the corresponding eigenvectors 
	\[ \textbf{v}_{|k|,i}= (p_0^{(|k|)}(x_{|k|,i},m-|k|),p_1^{(|k|)}(x_{|k|,i},m-|k|),\dots ,p_{n-m}^{(|k|)}(x_{|k|,i},m-|k|))^T.\]
	Consequently, the normalized eigenfunctions of $\Pnm{} \MCos{} \Pnm{}$ can be written as
	\begin{align*}
		\psi_{k,i}(\theta, \varphi) = & \ \Trm_k \, \frac{\textbf{v}_{|k|,i}}{\Vert \textbf{v}_{|k|,i}\Vert_2} = \ \kappa_{k,i} \, \sum_{l=m}^n p_{l-m}^{(|k|)}(x_{|k|,i},m-|k|) \, Y_l^k(\theta, \varphi) \\
		 = & \ \kappa_{k,i} \, \sin^{|k|}\theta \, \textrm{e}^{ik\varphi} \sum_{l=m}^n p_{l-m}^{(|k|)}(x_{|k|,i},m-|k|) \, p_{l-|k|}^{(|k|)}(\cos\theta)\\
		 = & \ \kappa_{k,i} \, \sin^{|k|}\theta \,  \frac{b_{n-|k|+1}^{(|k|)} \, p_{n-m}^{(|k|)}(x_{|k|,i},m-|k|)\,  p_{n-|k|+1}^{(|k|)}(\cos\theta)}{\cos\theta - x_{|k|,i}} \,  \textrm{e}^{ik\varphi}  \\
		 & \quad + \kappa_{k,i} \, \sin^{|k|}\theta \, \frac{ p_{m-|k|-1}^{(|k|)}(\cos\theta)}{\cos\theta - x_{|k|,i}}\,  \textrm{e}^{ik\varphi} ,
	\end{align*}
	by using the Christoffel-Darboux type formula \eqref{eq:christ-darboux2} (bearing in mind that $p_{n-m+1}^{(|k|)}(x_{|k|,i},m-|k|)=0$) and defining the normalizing constant $\kappa_{k,i}$ as given in \eqref{eq:normalizingconstant1}.\\
	An analogous argumentation can be conducted for  $m+1 \leq |k| \leq n $. By Remark \ref{rem:eigenvaluesJacobian}, the eigenvalues of $\Jj\big({ \textstyle |k|}\big)_{n-|k|}$ are now the $n-|k|+1$ roots $x_{|k|,i}$, \mbox{$i = 1,..., n-|k|+1$}, of the ultraspherical polynomial $p_{n-|k|+1}^{(|k|)}(x)$. The corresponding eigenvectors are given by 
	\[ \textbf{v}_{|k|,i}= (p_0^{(|k|)}(x_{|k|,i}),p_1^{(|k|)}(x_{|k|,i}),\dots ,p_{n-|k|}^{(|k|)}(x_{|k|,i}))^T.\]
	Applying the transition operator $\Trm_k$ to this eigenvector and normalizing the result with $\kappa_{k,i}= \Vert \textbf{v}_{|k|,i} \Vert_2^{-1}$, we get the eigenfunctions
	\begin{align*}
		\psi_{k,i}(\theta, \varphi) 
		 = & \ \kappa_{k,i} \, \sin^{|k|}\theta \, \textrm{e}^{ik\varphi} \, \sum_{l=|k|}^n p_{l-|k|}^{(|k|)}(x_{|k|,i}) \, p_{l-|k|}^{(|k|)}(\cos\theta)\\
		 = & \ \kappa_{k,i} \, b_{n-|k|+1}^{(|k|)} \, p_{n-|k|}^{(|k|)}(x_{|k|,i}) \,\frac{\sin^{|k|}\theta p_{n-|k|+1}^{(|k|)}(\cos\theta)}{\cos\theta- x_{|k|,i}} \, \textrm{e}^{ik\varphi}
	\end{align*}
	again by virtue of equation \eqref{eq:christ-darboux2}. In total, we have found $\sum_{k=-n}^n N_k = N_n^m$ different, and therefore all, eigenfunctions of $\Pnm{} \MCos{} \Pnm{}$ in $\Pi_m^n$. 
\end{proof}

\begin{rem}
The spectral Theorem \ref{thm:spectraldecomposition} for the operator $\Pnm{}\MCos{}\Pnm{}$ on the unit sphere is completely novel in this work. However, there exist related versions of 
$\Pnm{}\MCos{}\Pnm{}$ and Theorem \ref{thm:spectraldecomposition} in simpler settings. For orthogonal polynomials on the real line or on the unit circle the spectral analysis of the respective operator leads to deep results for the theory of orthogonal polynomials itself (see \citep{Simon2011}). The space-frequency analysis of such an operator for orthogonal polynomials on $[-1,1]$ is studied in \citep{erb2013}.
\end{rem}

\begin{figure}[h]
	\begin{center}
	\begin{tikzpicture}
		[scale=1,
		fi/.style={circle, thick, fill=black!20, minimum height = 1em, anchor = center},
		eigenf/.style ={circle, minimum height=1em, anchor = center}]
		\tikzstyle{background}=[rectangle, fill=gray!20, inner sep=0cm, rounded corners=1mm]
		\tikzstyle{triangle}=[rectangle, fill=blue!30, inner sep=-0.1cm, rounded corners=1mm]

		\matrix [matrix of math nodes, 
					column sep={1cm,between origins}, 
					row sep={0.8cm,between origins},
					] (A)
		{
					& 		& 		& 		& 						& 						& |[eigenf]| \psi_{6,1}& \\ 
					& 		& 		& 		& 						& |[eigenf]| \psi_{5,2}& |[eigenf]| \psi_{5,1}& \\ 
					& 		& 		& 		& |[eigenf]| \psi_{4,3}& |[eigenf]| \psi_{4,2}& |[eigenf]| \psi_{4,1}& \\ 
					& 		& 		& |[fi]|& |[eigenf]| \psi_{3,3}& |[eigenf]| \psi_{3,2}& |[eigenf]| \psi_{3,1}& \\ 	
					& 		& |[fi]|& |[fi]|& |[eigenf]| \psi_{2,3}& |[eigenf]| \psi_{2,2}& |[eigenf]| \psi_{2,1}& \\ 
					& |[fi]|& |[fi]|& |[fi]|& |[eigenf]| \psi_{1,3}& |[eigenf]| \psi_{1,2}& |[eigenf]| \psi_{1,1}& \\ 		
			 |[fi]|	& |[fi]|& |[fi]|& |[fi]|& |[eigenf]| \psi_{0,3}& |[eigenf]| \psi_{0,2}& |[eigenf]| \psi_{0,1}& \\ 
			 		& |[fi]|& |[fi]|& |[fi]|& |[eigenf]| \psi_{-1,3}& |[eigenf]| \psi_{-1,2}& |[eigenf]| \psi_{-1,1}& \\ 
			 		& 		& |[fi]|& |[fi]|& |[eigenf]| \psi_{-2,3}& |[eigenf]| \psi_{-2,2}& |[eigenf]| \psi_{-2,1}& \\ 
			 		& 		& 		& |[fi]|& |[eigenf]| \psi_{-3,3}& |[eigenf]| \psi_{-3,2}& |[eigenf]| \psi_{-3,1}& \\ 
			 		& 		& 		& 		& |[eigenf]| \psi_{-4,3}& |[eigenf]| \psi_{-4,2}& |[eigenf]| \psi_{-4,1}& \\ 
			 		& 		& 		& 		&  						& |[eigenf]| \psi_{-5,2}& |[eigenf]| \psi_{-5,1}& \\ 
			 		& 		& 		& 		&  						& 						& |[eigenf]| \psi_{-6,1}& \\ 
		};
                \node [left=60pt, label=left:{$\Pi_6^4$}] at (A-1-7) {};
		\node (x) [below=160pt] at (A-7-1) {};
		\node (y) [below=160pt, label=below:{i = 1}] at (A-7-7) {};
		\node (z) [below=160pt, label=below:{i = 3}] at (A-7-5) {};
		\draw (x.west) -- (y.east);
		\draw (y.north) -- (y.south);
		\draw (z.north) -- (z.south);
		
		\node (a) [right=40pt] at (A-1-7) {};
		\node (b) [right=40pt] at (A-13-7) {};
		\node (c) [right=40pt, label=right:{k = 0}] at (A-7-7) {};
		\node (d) [right=40pt, label=right:{k = m = 4}] at (A-3-7) {};
		\node (e) [right=40pt, label=right:{k = -m = -4}] at (A-11-7) {};
		\node (f) [right=50pt, label=right:{$(\Trm_2 \textbf{v}_{2,3}, \Trm_2 \textbf{v}_{2,2}, \Trm_2 \textbf{v}_{2,1}) $}] at (A-5-7) {};
		\node (g) [right=50pt, label=right:{$(\Trm_{\text{-}5} \textbf{v}_{5,2}, \Trm_{\text{-}5} \textbf{v}_{5,1})$}] at (A-12-7) {};
		\draw (a.north) -- (b.south);
		\draw (c.west) -- (c.east);
		\draw (d.west) -- (d.east);
		\draw (e.west) -- (e.east);

		\begin{pgfonlayer}{background}
        	\node [background, fit=(A-1-7) (A-13-7)] {};
		\node [background, fit=(A-2-6) (A-12-6)] {};
		\node [background, fit=(A-3-5) (A-11-5)] {};
                \node [triangle, fit=(A-5-5) (A-5-7)] {};
                \node [triangle, fit=(A-12-6) (A-12-7)] {};
                \node [background, left=78pt, fit = (A-1-7)]  {};
                \node [triangle, right=170pt, fit = (A-5-3) (A-5-7)]  {};
                \node [triangle, right=98pt, fit = (A-12-6) (A-12-7)]  {};
                \node [triangle, right=138pt, fit = (A-12-6) (A-12-7)]  {};
    	        \end{pgfonlayer}
							
	\end{tikzpicture}
	\caption[Visualization of the arrangement of the eigenfunctions]{Visualization of the arrangement of the eigenfunctions $\psi_{k,i}$ according to the structure of $\Pi_6^4$}
	\label{fig:arrangement-eigenfunctions}
	\end{center}
\end{figure}

\section{Localization and approximation properties of the eigenfunctions}
\label{sec:eigenfunctions}

The Hilbert-Schmidt theorem ensures that the eigenfunctions $\psi_{k,i}$ derived in Theorem \ref{thm:spectraldecomposition} form an orthonormal basis in the space $\Pi_n^m$. 
In this section, we will investigate some more properties of the eigenfunctions $\psi_{k,i}$ related to their space localization on the unit sphere $\Szwei$.

First of all, it follows from the definition of the $\psi_{k,i}$ as normalized eigenfunctions of the operator $\Pnm{} \MCos{} \Pnm{} $ that the mean value $\epsilon(\psi_{k,i})$
coincides with the eigenvalue $x_{|k|,i}$, i.e., 
\[\epsilon(\psi_{k,i}) = \langle \Pnm{} \MCos{} \Pnm{\psi_{k,i}},\psi_{k,i} \rangle = x_{|k|,i} \langle \psi_{k,i},\psi_{k,i} \rangle = x_{|k|,i}.\]
Using the correspondence of the eigenvalues $x_{|k|,i}$ with the zeros of the associated ultraspherical polynomial $p_{n-m+1}^{(|k|)}(x,m-|k|)$ or $p_{n-|k|+1}^{(|k|)}(x)$, 
respectively, we can describe the localization regions of the single eigenfunctions $\psi_{k,i}$. 

Sorting all eigenvalues $x_{|k|,i}$, $0 \leq |k| \leq n$, $1 \leq i \leq N_k$, from a maximal eigenvalue $x_{\textrm{max}}$ to a minimal eigenvalue $x_{\textrm{min}}$ 
results in a hierarchy in the sequence of the eigenfunctions concerning the space localization measured by $\epsilon(f)$. In this sense, the eigenfunction 
corresponding to $x_{\textrm{max}}$ is optimally localized at the north pole among all eigenfunctions. Then, examining the orthogonal 
complement $\Pi_n^m \ominus \text{span}\{\psi_{\text{max}} \}$, the optimally localized eigenfunction is $\psi_{\text{max}-1} $. Accordingly,
one obtains a chain of eigenfunctions in which the $j$-th. element is better localized with respect to the the north pole than the subsequent $(j-1)$-th. element. 

There exists a series of relations between the different eigenvalues. In the following we list a few important ones.
\begin{itemize}
\item[a)] For a fixed row-number $k$, the eigenvalues $x_{|k|,i}$, $1 \leq i \leq N_k$, are all in the interior of $[-1,1]$ and pairwise distinct.
This is a standard result from the theory of orthogonal polynomials (see \citep[I. Theorem 5.2]{Chihara}). In the following, we order the $N_k$ zeros 
in decreasing size such that 
\begin{align*}
	x_{k,1} &> x_{k,2} >\ldots > x_{k,N_k-1} > x_{k,N_k}.
\end{align*}
\item[b)] For $m \leq |k| < n$, the zeros $x_{|k|,i}$ and $x_{|k|+1,i}$ are interlacing (cf. \citep[I. Theorem 5.3]{Chihara}), i.e. 
\begin{equation*}
	x_{k,i} > x_{k+1,i} > x_{k,i+1}, \quad 1 \leq i \leq N_k-1.
\end{equation*}
\item[c)] If $|k| < m$, then $x_{k,1} > x_{k+1,1}$ and $x_{k,n-m+1} < x_{k+1,n-m+1}$. In particular, 
this statement implies that $x_{\max} = x_{0,1}$ and $x_{\min} = x_{0,n-m+1}$. The polynomial $Q \in \Pi_n^m$, $\|Q\| = 1$, 
that maximizes the functional $\eps(Q)$ and that is optimally localized at the north pole is therefore given by the eigenfunction 
$\psi_{0,1}$. By the same argumentation, the eigenfunction that is best localized at the south pole is given by $\psi_{0,n-m+1}$. 
These results were deduced in \citep{ErbDiss, ErbTookos2011, Fernandez2007}. Polynomials that are optimally localized with respect to
more general localization functionals, were studied in \cite{Michel2011}.
\end{itemize}

The ordering of the different eigenfunctions for the polynomial space $\Pi_6^4$ is illustrated in Figure \ref{fig:arrangement-eigenfunctions}. Figure \ref{fig:plot1} shows some examples of the real part of the eigenfunctions. 

\begin{figure}[H]
	\centering

	\subfigure[$\psi_{0,1}$, $x_{0,1} = 0.9974$]{\includegraphics[width=0.38\textwidth]{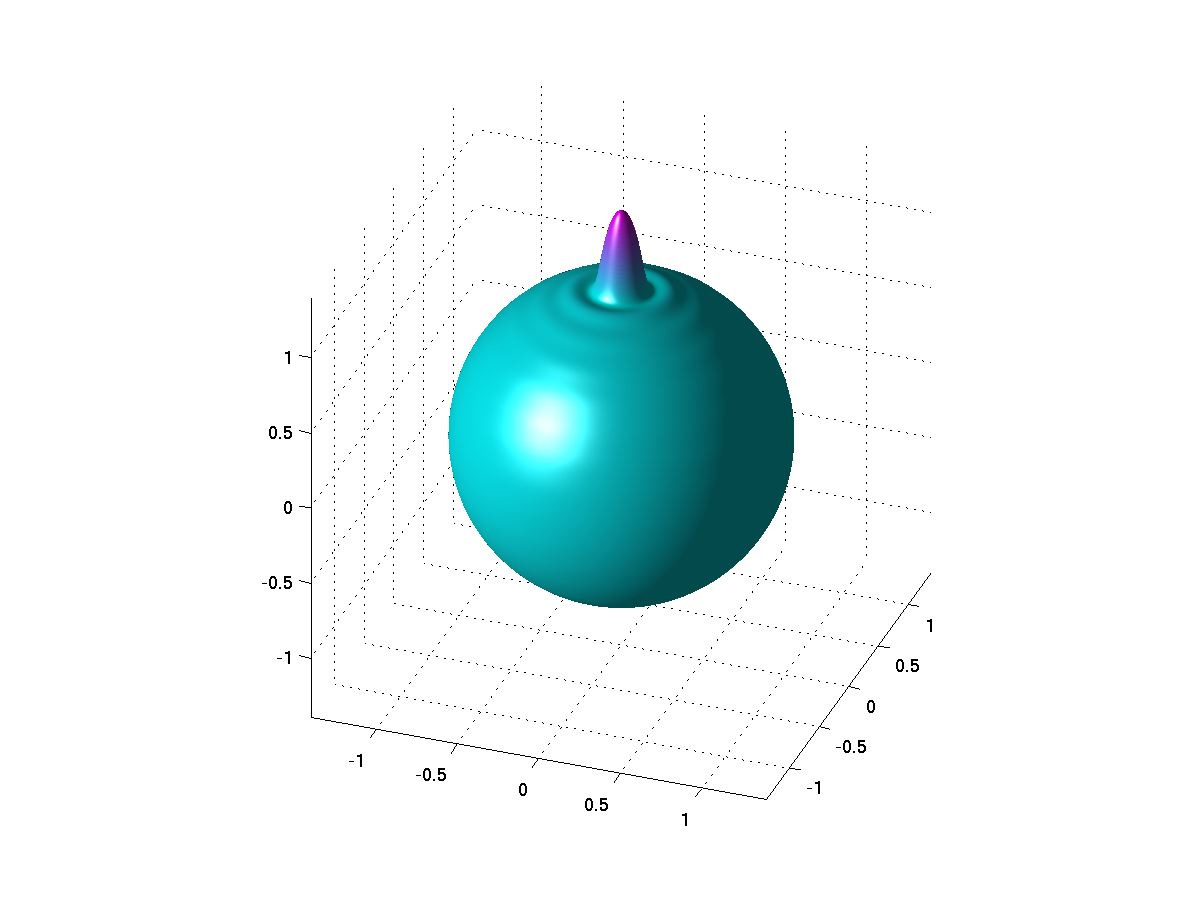}} \hspace{-12mm}
	\subfigure[$\psi_{0,8}$, $x_{0,8} = 0.7472$]{\includegraphics[width=0.38\textwidth]{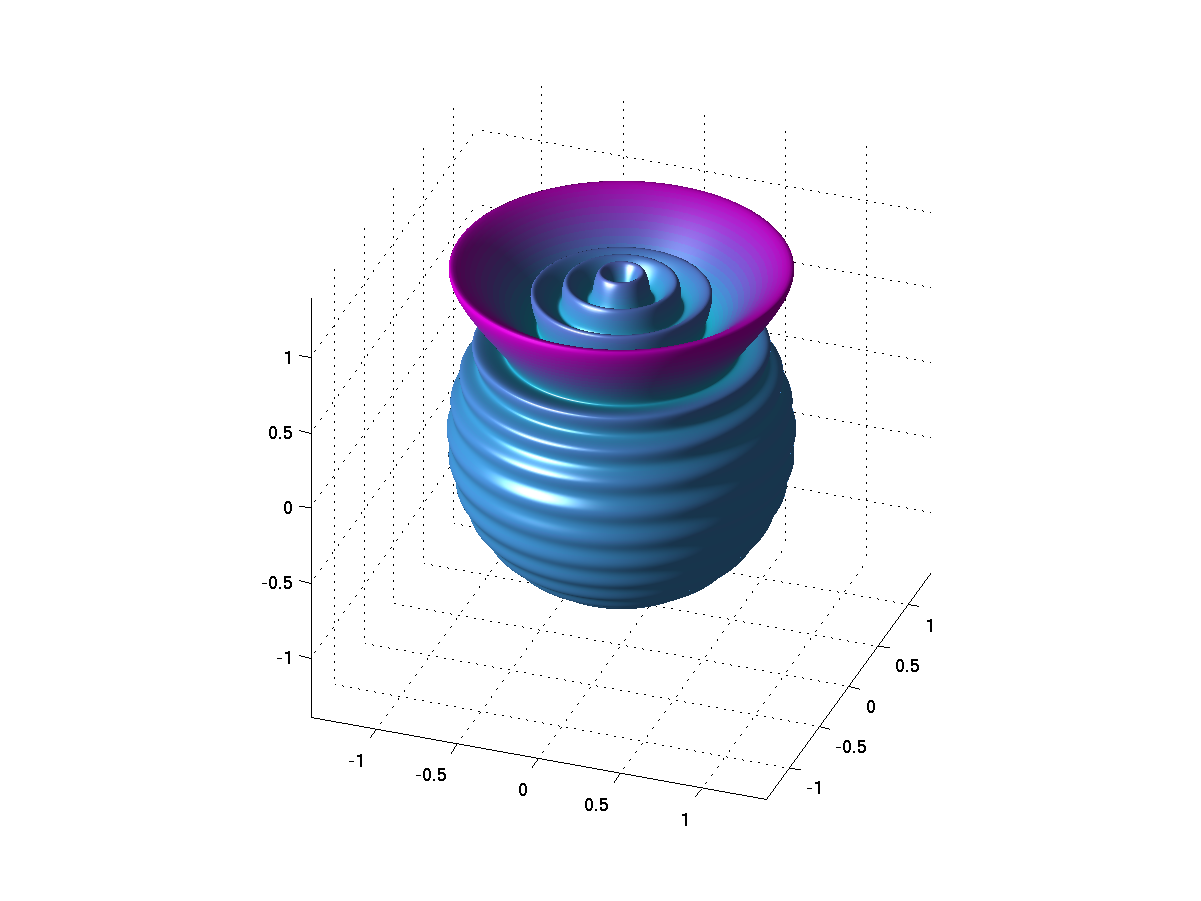}} \hspace{-12mm}
	\subfigure[$\psi_{0,16}$, $x_{0,16} = 0.0936$]{\includegraphics[width=0.38\textwidth]{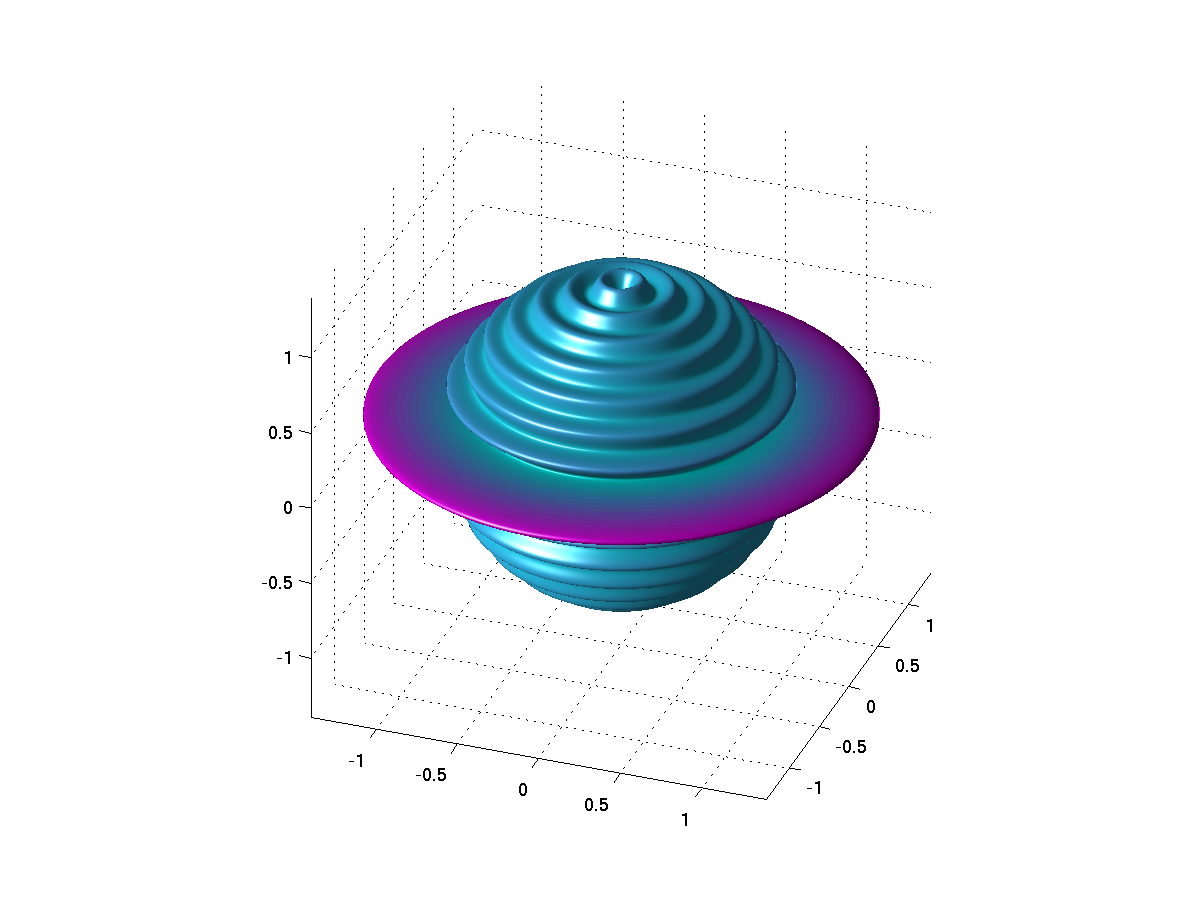}} \\

	\subfigure[$\psi_{16,1}$, $x_{16,1} = 0.7921$]{\includegraphics[width=0.38\textwidth]{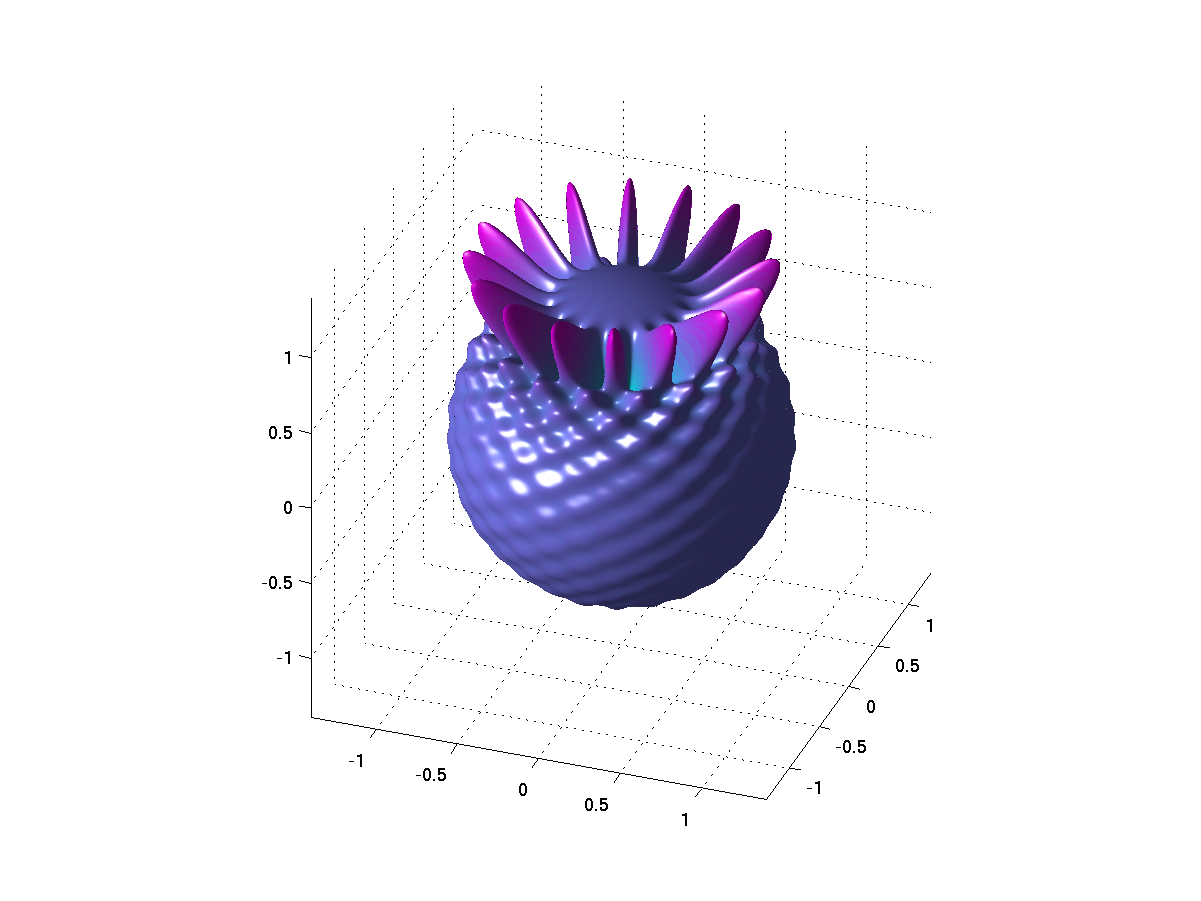}} \hspace{-12mm}
	\subfigure[$\psi_{16,8}$, $x_{16,8} = 0.1066$]{\includegraphics[width=0.38\textwidth]{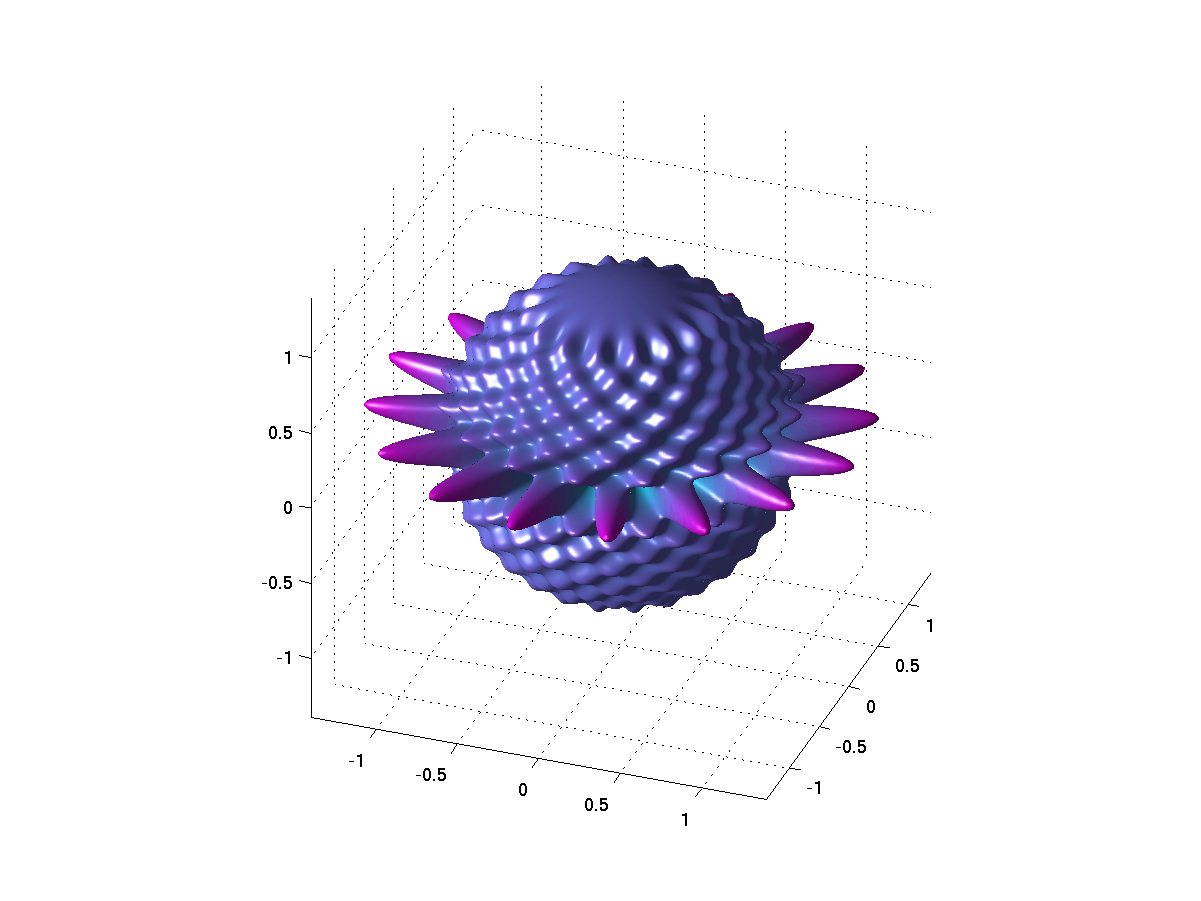}} \hspace{-12mm}
	\subfigure[$\psi_{32,1}$, $x_{32,1} = 0$]{\includegraphics[width=0.38\textwidth]{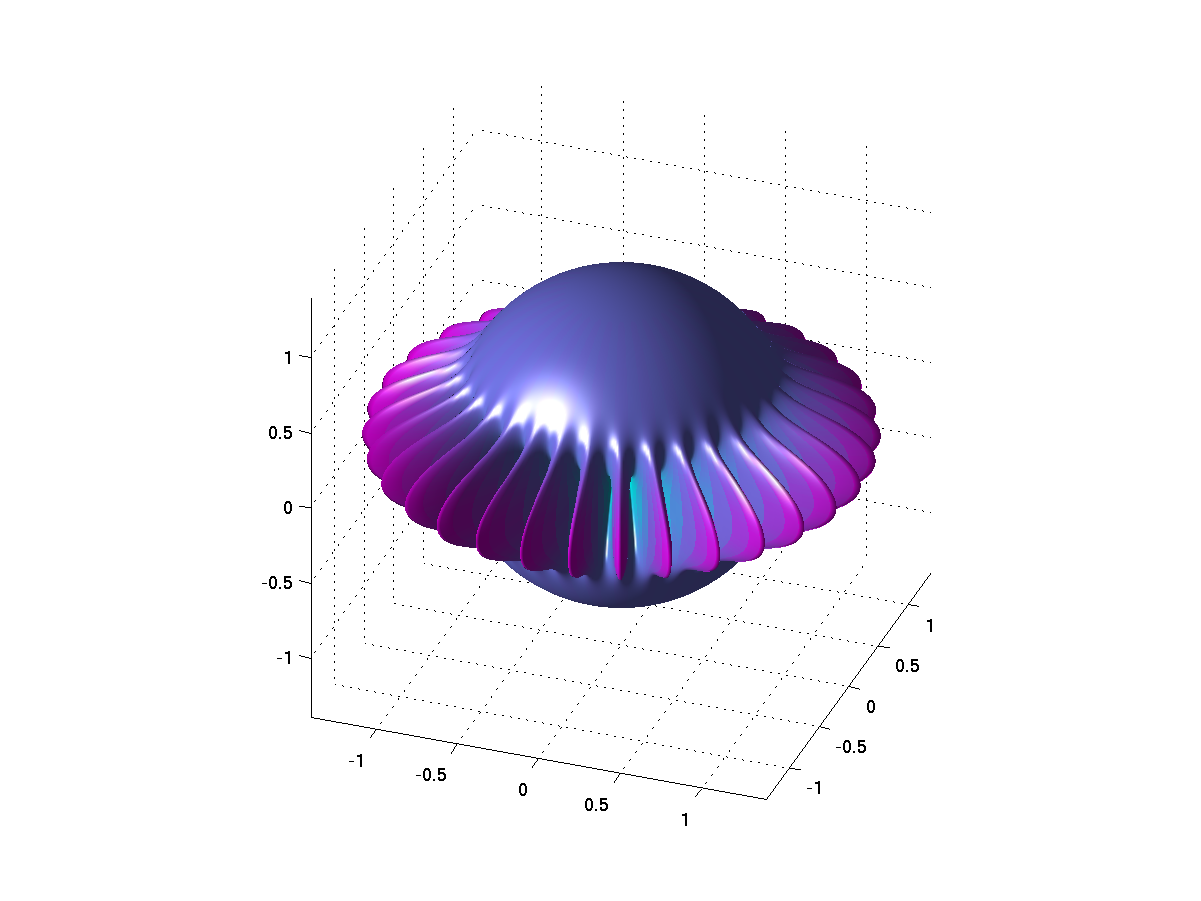}} 
	\caption{Real part of the eigenfunctions $\psi_{k,i}$ for $n=32$, $m= 0$ and their localization with respect to the north pole.}
	\label{fig:plot1}
 \end{figure}

Now, given a space-localized basis for the polynomial spaces $\Pi_n^m$, we want to analyse the decomposition of a polynomials $Q$ in the new basis. 
The next theorems illustrate that for a good approximation of a space-localized polynomial $Q$ only the eigenfunctions are needed that are located in the region where the mass of $Q$ is concentrated. 

\begin{thm} \label{thm:localizedapproximationMarkov}
Let $a >0$, $I_a^- := [-1,-1+a)$ and $I_a^+:=(1-a,1]$. If $Q\in \Pi_n^m$ with $ \Vert Q \Vert = 1$, the following error bounds hold:
\begin{align}
\label{eq:errorbound1}
	\Bigg \Vert &  Q- \sum_{k=-n}^n \ \sum_{i: x_{|k|,i} \in I_a^-} \langle Q, \psi_{k,i} \rangle \, \psi_{k,i} \Bigg \Vert^2 \leq \frac{1+\epsilon(Q)}{a}, \\
\label{eq:errorbound2}
	\Bigg \Vert &  Q- \sum_{k=-n}^n \ \sum_{i: x_{|k|,i} \in I_a^+ }\langle Q, \psi_{k,i} \rangle \, \psi_{k,i} \Bigg \Vert^2 \leq \frac{1-\epsilon(Q)}{a}. 
\end{align}
\end{thm}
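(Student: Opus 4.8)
The plan is to exploit that the eigenfunctions $\psi_{k,i}$ form an orthonormal basis of $\Pi_n^m$ (by the Hilbert--Schmidt theorem, as recorded after Theorem \ref{thm:spectraldecomposition}) and that $\Pnm{}\MCos{}\Pnm{}$ acts diagonally on this basis with eigenvalues $x_{|k|,i}$. First I would expand the unit-norm polynomial $Q \in \Pi_n^m$ as
\[
Q = \sum_{k=-n}^n \sum_{i=1}^{N_k} d_{k,i}\, \psi_{k,i}, \qquad d_{k,i} := \langle Q, \psi_{k,i}\rangle,
\]
so that Parseval gives $\sum_{k,i} |d_{k,i}|^2 = \Vert Q\Vert^2 = 1$. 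Using the spectral decomposition of Theorem \ref{thm:spectraldecomposition} together with the identity $\eps(Q) = \langle \Pnm{}\MCos{}\Pnm{Q}, Q\rangle$, I would then compute
\[
\eps(Q) = \sum_{k=-n}^n \sum_{i=1}^{N_k} x_{|k|,i}\, |d_{k,i}|^2,
\]
expressing the mean value as a weighted sum of the squared coefficients with the eigenvalues as weights.

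Next, since the $\psi_{k,i}$ are orthonormal, the squared approximation error on the left of \eqref{eq:errorbound1} is exactly the tail sum of $|d_{k,i}|^2$ over the complementary index set, i.e.\ over all pairs $(k,i)$ with $x_{|k|,i} \notin I_a^-$, equivalently $x_{|k|,i} \geq -1+a$. The key step is a Chebyshev-type estimate: on this complementary set one has $1 + x_{|k|,i} \geq a$, hence $|d_{k,i}|^2 \leq \frac{1}{a}(1+x_{|k|,i})\,|d_{k,i}|^2$. Summing over the complement and then enlarging the range to all indices---which is permissible because $1 + x_{|k|,i} \geq 0$ for every eigenvalue, all of them lying in $(-1,1)$ by item~a) in the list above---yields
\[
\sum_{(k,i):\, x_{|k|,i} \notin I_a^-} |d_{k,i}|^2 \leq \frac{1}{a}\sum_{k,i} (1+x_{|k|,i})\,|d_{k,i}|^2 = \frac{\Vert Q\Vert^2 + \eps(Q)}{a} = \frac{1+\eps(Q)}{a},
\]
which is precisely \eqref{eq:errorbound1}.

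The bound \eqref{eq:errorbound2} follows by the mirror-image argument: the complement of $I_a^+ = (1-a,1]$ consists of the eigenvalues with $x_{|k|,i} \leq 1-a$, so that $1 - x_{|k|,i} \geq a$ there, and the same manipulation with $1-x_{|k|,i}$ in place of $1+x_{|k|,i}$ produces $\frac{\Vert Q\Vert^2 - \eps(Q)}{a} = \frac{1-\eps(Q)}{a}$. This is essentially Chebyshev's (or Markov's) inequality transported to the present spectral setting, so I do not expect a serious obstacle; the only point requiring a moment's care is the nonnegativity of the weights $1 \pm x_{|k|,i}$, guaranteed by the strict inclusion of the spectrum in $[-1,1]$, which is what legitimizes passing from the tail sum to the full moment sum.
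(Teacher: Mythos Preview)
Your proposal is correct and follows essentially the same approach as the paper: expand $Q$ in the orthonormal eigenbasis, write the squared error as the tail sum over the complementary index set via Parseval, use $1+x_{|k|,i}\geq a$ on that set, enlarge to the full sum by nonnegativity of $1+x_{|k|,i}$, and identify the result as $(1+\eps(Q))/a$. The paper's proof is the same Markov-type argument, and it likewise notes the symmetric treatment for \eqref{eq:errorbound2}.
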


\begin{proof}
Consider a polynomial $Q \in \Pi_n^m$ with $ \Vert Q \Vert = 1$. Since the eigenfunctions $\psi_{k,i}$ form an orthonormal basis of $\Pi_n^m$, the polynomial $Q$ can be written as 
\[ Q(\theta, \varphi) = \sum_{k = -n}^{n} \sum_{i=1}^{N_k} \langle Q, \psi_{k,i} \rangle \psi_{k,i}(\theta, \varphi).\]
Therefore,
\begin{align*}
\Bigg \Vert  Q - &\sum_{k=-n}^n \ \sum_{i: x_{|k|,i} \in I_a^-} \langle Q, \psi_{k,i} \rangle\, \psi_{k,i} \Bigg  \Vert^2
 = \Bigg \Vert  \sum_{k=-n}^n \ \sum_{i: x_{|k|,i} \in [-1+a,1]} \langle Q, \psi_{k,i} \rangle\, \psi_{k,i} \Bigg \Vert^2\\
 & = \sum_{k=-n}^n \ \sum_{i: x_{|k|,i} \in [-1+a,1]} \vert\langle Q, \psi_{k,i} \rangle \vert^2
\end{align*}
holds by the Pythagorean theorem. For the eigenvalues $x_{|k|,i}$ in  the interval $[-1+a,1] $
we have $(1+x_{|k|,i}) \geq a$. Consequently, we can derive the estimates 
\begin{align*}
\sum_{k=-n}^n  \sum_{i: x_{|k|,i} \in [-1+a,1] } \!\!\!  \vert\langle Q, \psi_{k,i} \rangle\vert^2 
 & \leq  \frac{1}{a}\sum_{k=-n}^n \ \sum_{i: x_{|k|,i} \in [-1+a,1]} \!\!\! \vert\langle Q, \psi_{k,i} \rangle\vert^2 (1+x_{|k|,i})\\
 & \leq   \frac{1}{a}\left(\sum_{k=-n}^n  \sum_{i=1}^{N_k} (1+x_{|k|,i}) \vert\langle Q, \psi_{k,i} \rangle\vert^2 \right). 
\end{align*} 
By the Pythagorean theorem, we have  
\[ \sum_{k=-n}^n \! \sum_{i=1}^{N_k} \vert \langle Q, \psi_{k,i} \rangle\vert^2 = \Vert Q \Vert = 1.\]
On the other hand, the spectral Theorem \ref{thm:spectraldecomposition} yields
\begin{align*}
\eps(Q) &= \langle \Pnm{} \MCos{} \Pnm{Q}, Q \rangle = \sum_{k=-n}^n \! \sum_{i=1}^{N_k} \vert\langle Q, \psi_{k,i} \rangle\vert^2 x_{|k|,i}.\end{align*}
Thus, we obtain the error bound \eqref{eq:errorbound1}.
The second error bound is derived from the fact that $(1-x_{|k|,i}) \geq a$ holds for eigenvalues $x_{|k|,i}\in [-1,1] \setminus I_a^+$. 
Analogous steps as for the first bound then lead to equation \eqref{eq:errorbound2}. 
\end{proof}

\begin{rem}
For a normalized polynomial $Q \in \Pi_n^m$, we can define the discrete density function $\rho$ on $\Rr$ by
\[ \rho(x) = \sum_{i,k: \eps(\psi_{k,i}) = x} |\langle Q, \psi_{k,i} \rangle|^2  \] 
supported on the set of eigenvalues $x_{|k|,i} \in (-1,1)$.
Then, the expectation value of a $\rho$-distributed random variable $X$ is given by $\eps(Q)$ and the statement of Theorem \ref{thm:localizedapproximationMarkov} corresponds to 
Markov's inequality (cf. \cite[p. 114]{Papoulis}) for the random variables $1+X$ and $1-X$. 
\end{rem}
Now, if we define the discrete variance 
\[ \var_\rho(Q) := \sum_{k,i} \vert\langle Q, \psi_{k,i} \rangle\vert^2 (x_{|k|,i}-\eps(Q))^2 = \sum_{k,i} \vert\langle Q, \psi_{k,i} \rangle\vert^2 (x_{|k|,i}^2-\eps(Q)^2) \]
and use Chebyshev's inequality (\cite[p. 114]{Papoulis}) for a $\rho$-distributed random variable,
we immediately get the following result.

\begin{cor} \label{cor:localizedapproximationChebyshev}
Let $a >0$, $Q\in \Pi_n^m$, $ \Vert Q \Vert = 1$ and $I_a = (\eps(Q) - a, \eps(Q) + a)$. Then, the following error bound holds
\begin{equation}
\label{eq:errorbound3}
	\Bigg \Vert  Q - \sum_{k=-n}^n \ \sum_{i: x_{|k|,i} \in I_a} \langle Q, \psi_{k,i} \rangle \, \psi_{k,i} \Bigg \Vert^2 \leq \frac{\var_\rho(Q)}{a^2}.
\end{equation}
\end{cor}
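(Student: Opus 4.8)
The plan is to mirror the proof of Theorem \ref{thm:localizedapproximationMarkov}, replacing Markov's inequality by Chebyshev's inequality applied to the discrete density $\rho$ introduced in the preceding remark. First I would expand $Q$ in the orthonormal eigenbasis $\{\psi_{k,i}\}$ guaranteed by the spectral Theorem \ref{thm:spectraldecomposition}, write the complementary sum over those indices with $x_{|k|,i} \notin I_a$, and invoke the Pythagorean theorem to turn the squared norm of the omitted part into a sum of squared coefficients:
\begin{equation*}
\Bigg\Vert Q - \sum_{k=-n}^n \sum_{i: x_{|k|,i}\in I_a} \langle Q,\psi_{k,i}\rangle\,\psi_{k,i}\Bigg\Vert^2 = \sum_{k=-n}^n \sum_{i: x_{|k|,i}\notin I_a} |\langle Q,\psi_{k,i}\rangle|^2 .
\end{equation*}

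Next I would observe that for every omitted index one has $|x_{|k|,i}-\eps(Q)| \geq a$, equivalently $(x_{|k|,i}-\eps(Q))^2/a^2 \geq 1$. Multiplying each omitted coefficient by this factor (which is $\geq 1$) and then extending the sum to all indices gives
\begin{equation*}
\sum_{k,i: x_{|k|,i}\notin I_a} |\langle Q,\psi_{k,i}\rangle|^2 \leq \frac{1}{a^2}\sum_{k,i: x_{|k|,i}\notin I_a} |\langle Q,\psi_{k,i}\rangle|^2 (x_{|k|,i}-\eps(Q))^2 \leq \frac{1}{a^2}\sum_{k,i} |\langle Q,\psi_{k,i}\rangle|^2 (x_{|k|,i}-\eps(Q))^2 .
\end{equation*}
By the definition of $\var_\rho(Q)$ recorded just before the corollary, the final sum is exactly $\var_\rho(Q)$, which yields the claimed bound \eqref{eq:errorbound3}.

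This is the probabilistic content of Chebyshev's inequality: interpreting $\rho$ as the law of a random variable $X$ supported on the eigenvalues with $\mathbb{E}[X]=\eps(Q)$, the left-hand sum is $\mathbb{P}(|X-\eps(Q)|\geq a)$ and the right-hand side is $\var_\rho(Q)/a^2$. I do not anticipate a genuine obstacle here; the only point requiring a little care is the bookkeeping of which indices are excluded (those with $x_{|k|,i}$ outside the open interval $I_a$, i.e.\ $|x_{|k|,i}-\eps(Q)|\geq a$) so that the chain of inequalities above is applied to precisely the right set. The identity $\sum_{k,i}|\langle Q,\psi_{k,i}\rangle|^2 = \|Q\|^2 = 1$ is not even needed in this normalized form, since the variance term already absorbs the centering; the second equality in the definition of $\var_\rho(Q)$ (expressing it as $\sum |\langle Q,\psi_{k,i}\rangle|^2(x_{|k|,i}^2-\eps(Q)^2)$) follows from $\sum_{k,i}|\langle Q,\psi_{k,i}\rangle|^2=1$ together with $\sum_{k,i}|\langle Q,\psi_{k,i}\rangle|^2 x_{|k|,i}=\eps(Q)$, both already established in the proof of Theorem \ref{thm:localizedapproximationMarkov}.
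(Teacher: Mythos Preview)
Your proposal is correct and follows exactly the approach the paper indicates: the paper's proof consists only of the remark that the result follows immediately from Chebyshev's inequality applied to the discrete density $\rho$, and you have simply written out those details by mirroring the proof of Theorem \ref{thm:localizedapproximationMarkov} with the quadratic weight $(x_{|k|,i}-\eps(Q))^2/a^2$ in place of the linear one.
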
 

In Theorem \ref{thm:localizedapproximationMarkov} and Corollary \ref{cor:localizedapproximationChebyshev} it is a priori not clear how many zeros $x_{|k|,i}$ are contained in the intervals $I_a$, $I_a^+$ or $I_a^-$. Therefore, we want to analyze now the distribution of the zeros $x_{|k|,i}$ on the interval $[-1,1]$. We give first an auxiliary result about particular operators related to the 
space-frequency operator $\Pnm{} \MCos{} \Pnm{}$. 

\begin{lem} \label{lemma-rankofoperators}
For $j = 1,2,3, \cdots$, the operators
\begin{align} \label{equation-differenceoperators1} 
\mathrm{A}_j &= (\Pnn{} \MCos{} \Pnn{})^j - \Pnn{} \MCos{}^j \Pnn{},\\
\mathrm{B}_j &= \Pnn{} \MCos{}^j \Pnn{} - \Pnm{} \MCos{}^j \Pnm{}, \\
\mathrm{C}_j &= (\Pnm{} \MCos{} \Pnm{})^j - (\Pnn{} \MCos{} \Pnn{})^j,
\end{align} 
have norm at most $2$. The operator $A_j$ is of rank at most $(2n+1-j)(j-1)$,  while the operators
$B_j$ and $C_j$ are of rank at most $(2m-1)(n+1)-m(m-1)$. 
\end{lem}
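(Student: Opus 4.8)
The plan is to reuse the order-by-order block decomposition that already underlies Lemma~\ref{lem:unitaryequivalence}. Since $\MCos{}$ is multiplication by $\cos\theta$ and, by the three-term recurrence \eqref{eq:recursionorthonormal}, sends each $Y_l^k$ into a combination of $Y_{l-1}^k$ and $Y_{l+1}^k$, the operator $\MCos{}$ leaves every order-$k$ subspace $H_k = \overline{\spann}\{Y_l^k : l \ge |k|\}$ invariant and acts on it as a semi-infinite, zero-diagonal (hence tridiagonal) Jacobi matrix $J_k$, cf.\ \eqref{eq:jacobi}. The projections $\Pnn{}$ and $\Pnm{}$ are block-diagonal in the same decomposition, truncating $H_k$ to the window $l\in[|k|,n]$, respectively $l\in[\max(|k|,m),n]$. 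Hence each of $\mathrm{A}_j,\mathrm{B}_j,\mathrm{C}_j$ is an orthogonal direct sum over $-n\le k\le n$ of finite matrices built from the single tridiagonal $J_k$ and its truncations, and it suffices to bound the rank of each block and sum. The norm bound is immediate: $\|\MCos{}\|\le 1$ because $|\cos\theta|\le1$ and the projections have norm $1$, so $(\Pnn{}\MCos{}\Pnn{})^j$, $\Pnn{}\MCos{}^j\Pnn{}$, $(\Pnm{}\MCos{}\Pnm{})^j$, $\Pnm{}\MCos{}^j\Pnm{}$ all have norm $\le1$; each of $\mathrm{A}_j,\mathrm{B}_j,\mathrm{C}_j$ is a difference of two of these and thus has norm $\le2$.

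For $\mathrm{A}_j$ I would use $\Pnn{}^2=\Pnn{}$ to write $(\Pnn{}\MCos{}\Pnn{})^j=\Pnn{}(\MCos{}\Pnn{})^j$, so that in the $k$-block $\mathrm{A}_j$ equals $(J_N)^j-(J_k^j)_N$, the $j$-th power of the $N_k\times N_k$ truncation $J_N$ minus the truncation of the $j$-th power, with $N_k=n-|k|+1$. Since $J_k$ is tridiagonal with zero diagonal, $J_k^j$ has bandwidth $j$, and the entries of $(J_k^j)_N$ and $(J_N)^j$ can differ, for $p,q\le n$, only through length-$j$ walks that leave the window through its upper end $n$; the lower end $|k|$ is the natural boundary of $J_k$, so no discrepancy arises there. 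A length-$j$ walk from $p$ to $q$ that reaches index $\ge n+1$ and returns forces $p+q\ge 2n+2-j$, so the block difference is supported in the bottom-right corner indexed by $[\,n-j+2,n\,]$, and being symmetric it has rank at most $\min(N_k,j-1)$. Summing this corner contribution over $-n\le k\le n$, taking into account that the windows with $|k|$ close to $n$ are shorter than $j-1$ (and hence contribute strictly less, with extra parity cancellations in the very shortest windows), is intended to deliver the bound $(2n+1-j)(j-1)$. I expect this boundary-and-parity bookkeeping to be the main obstacle: the naive count of nonzero corner rows has to be sharpened to pin down the exact constant.

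For $\mathrm{B}_j$ and $\mathrm{C}_j$ the analysis is easier, since only the bottom truncation differs between $\Pnn{}$ and $\Pnm{}$. In any block with $|k|\ge m$ both projections truncate $H_k$ to the identical window $[|k|,n]$, so the $k$-blocks of $\mathrm{B}_j$ and of $\mathrm{C}_j$ vanish; only the $2m-1$ blocks with $|k|\le m-1$ survive. There the block operators act inside the $(n-|k|+1)$-dimensional space $\Pi_{n,k}^0$, so the trivial estimate $\operatorname{rank}\le\dim\Pi_{n,k}^0=n-|k|+1$ applies to each. Summing over $|k|\le m-1$ gives
\[
\sum_{k=-(m-1)}^{m-1}\bigl(n-|k|+1\bigr)=(2m-1)(n+1)-m(m-1),
\]
which is exactly the asserted bound for both $\mathrm{B}_j$ and $\mathrm{C}_j$. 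The only point to check here is that these two operators are genuinely supported on the low-order blocks $|k|\le m-1$, and this follows at once from the block-diagonal form of $\Pnn{}$ and $\Pnm{}$ together with the fact that $\MCos{}$ preserves the order $k$.
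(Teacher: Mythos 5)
Your proposal is correct and follows essentially the same route as the paper: the norm bound is identical; your count for $\mathrm{B}_j$ and $\mathrm{C}_j$ (blocks with $|k|\ge m$ vanish, bound the rest by their dimension) is exactly the paper's, which phrases it as counting the spherical harmonics $Y_l^k$ with $|k|<m$; and for $\mathrm{A}_j$ the paper uses the same ``walks can only escape through the top of the window'' idea, concluding $\mathrm{A}_jY_l^k=0$ for $l\le n-j+1$ and counting the remaining basis functions, which is your blockwise corner argument read off directly on the $Y_l^k$. The one point you flag as unresolved --- sharpening the corner count to the stated constant --- is not a defect of your argument: your bound $\sum_k\min(N_k,j-1)$ sums to $(n+1)^2-(n-j+2)^2=(2n+3-j)(j-1)$, which is exactly what the paper's own count of the harmonics with $n-j+2\le l\le n$ gives, and the stated $(2n+1-j)(j-1)$ is in fact slightly too small: for $j=2$ one computes $\mathrm{A}_2Y_n^k=-\bigl(b^{(|k|)}_{n-|k|+1}\bigr)^2Y_n^k$ for every $|k|\le n$, so $\operatorname{rank}\mathrm{A}_2=2n+1$, exceeding $2n-1$; the hoped-for parity cancellations in the short windows do not recover the difference $2(j-1)$. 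This discrepancy is immaterial for the only use of the lemma, the trace estimate in Theorem \ref{thm:weaklimit}, where only the order $\Ord(nj)$ of the rank enters, so you should simply prove and use the bound $(2n+3-j)(j-1)$.
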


\begin{proof}
All the involved operators $\Pnm{} \MCos{}^j \Pnm{}$ and $(\Pnm{} \MCos{} \Pnm{})^j$, $j = 1,2,3, \cdots$, have norm smaller than $1$. Thus, the operators 
$\mathrm{A}_j$, $\mathrm{B}_j$ and $\mathrm{C}_j$ have operator norm at most $2$. The operators $\mathrm{A}_j$, $\mathrm{B}_j$ and $\mathrm{C}_j$ map 
the orthogonal complement $(\Pin^0)^\perp$ to zero. For the polynomial space $\Pin^0$, we use
the spherical harmonics $Y_l^k$, $0 \leq l \leq n$, $-l \leq k \leq l$, as a basis. We have $\mathrm{B}_j Y_l^k = \mathrm{C}_j Y_l^k = 0$ if $|k| \geq m$. Summing up all spherical harmonics $Y_l^k$ with
$|k| < m$, we get the upper bound $(2m-1)(n+1) - m^2+m$ for the rank of the operators $\mathrm{B}_j$ and $\mathrm{C}_j$ (for $m=0$, we get of course $0$). For the operators $\mathrm{A}_j$, we have
$\mathrm{A}_j Y_l^k = 0$ for all $Y_l^k$ with $l \leq n-j+1$. Summing up all possible spherical harmonics with index $n \geq l \geq n-j$, we get $(2n+1-j)(j-1)$ as an upper bound for the rank
of the operator $\mathrm{A}_j$. 
\end{proof}

Now, we can state the following weak limit for the distribution of the zeros $x_{|k|,i}$ as $n$ tends to infinity.

\begin{thm} \label{thm:weaklimit}
Let $f$ be a bounded, Riemann integrable function on $[-1,1]$, then
\begin{equation} \label{equation-weaklimit} \lim_{n\to \infty} \frac{1}{N_n^m} \sum_{k=-n}^n \sum_{i = 1}^{\Nk}  f(x_{|k|,i}) = \frac{1}{2}\int_{-1}^1 f(x) dx.\end{equation}
\end{thm}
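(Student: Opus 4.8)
The plan is to argue by the \emph{method of moments}. First I would reduce the claim to the special case of monomials $f(x) = x^j$, $j \in \Nn_0$. Since all eigenvalues $x_{|k|,i}$ lie in the compact interval $(-1,1)$ (item a) above), the empirical measures $\mu_n := \frac{1}{N_n^m}\sum_{k,i}\delta_{x_{|k|,i}}$ are probability measures supported in $[-1,1]$, and the target $\tfrac12\,\mathrm{d}x$ is the uniform distribution there, which is determined by its moments. Hence, once I show $\int x^j\,\mathrm{d}\mu_n \to \tfrac12\int_{-1}^1 x^j\,\mathrm{d}x$ for every $j$, I obtain weak convergence $\mu_n \Rightarrow \tfrac12\,\mathrm{d}x$, and the portmanteau theorem upgrades this to \eqref{equation-weaklimit} for every bounded, Riemann integrable $f$.

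For the monomial moments, the key observation is that by Lemma \ref{lem:unitaryequivalence} the numbers $x_{|k|,i}$ are precisely the eigenvalues of $\mathcal{J}_n^m$, hence of $\Pnm{}\MCos{}\Pnm{}$ restricted to $\Pi_n^m$ (its orthogonal complement lies in the kernel). Therefore
\[
\sum_{k=-n}^n\sum_{i=1}^{\Nk} x_{|k|,i}^j = \tr\big((\Pnm{}\MCos{}\Pnm{})^j\big).
\]
Now I would invoke Lemma \ref{lemma-rankofoperators}: adding the three defining differences gives $(\Pnm{}\MCos{}\Pnm{})^j = \Pnm{}\MCos{}^j\Pnm{} + \mathrm{A}_j + \mathrm{B}_j + \mathrm{C}_j$, where each correction is self-adjoint with norm at most $2$ and rank $O(n)$ for fixed $j$ and $m$. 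Consequently $|\tr \mathrm{A}_j| + |\tr \mathrm{B}_j| + |\tr \mathrm{C}_j| = O(n)$, and dividing by $N_n^m = (n+m+1)(n-m+1) = O(n^2)$ makes this contribution vanish as $n \to \infty$.

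It then remains to evaluate $\tr(\Pnm{}\MCos{}^j\Pnm{})$, which I expect to be clean rather than an obstacle. Using the orthonormal basis $\{Y_l^k\}$ of $\Pi_n^m$ and the self-adjointness of $\Pnm{}$,
\[
\tr(\Pnm{}\MCos{}^j\Pnm{}) = \sum_{l=m}^n\sum_{k=-l}^l \frac{1}{4\pi}\int_{\Szwei}(\cos\theta)^j\,|Y_l^k(\theta,\varphi)|^2\,\Dx{\omega},
\]
and the addition theorem $\sum_{k=-l}^l |Y_l^k|^2 = 2l+1$ collapses the angular sum to the constant $\sum_{l=m}^n(2l+1) = N_n^m$. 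The integral then factorizes, and after the substitution $x = \cos\theta$ I obtain
\[
\frac{1}{N_n^m}\tr(\Pnm{}\MCos{}^j\Pnm{}) = \frac{1}{4\pi}\int_{\Szwei}(\cos\theta)^j\,\Dx{\omega} = \frac{1}{2}\int_{-1}^1 x^j\,\mathrm{d}x,
\]
\emph{exactly}, for every $n$. Combined with the vanishing trace correction, this yields the monomial limit.

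The genuinely delicate step is the final passage from moments to arbitrary bounded Riemann integrable $f$; everything else is either supplied by the preceding lemmas or is a direct computation. I would handle it through the standard weak-convergence machinery above, taking care that the discontinuity set of $f$ is Lebesgue-null and hence negligible for the absolutely continuous limiting measure $\tfrac12\,\mathrm{d}x$, so that $\int f\,\mathrm{d}\mu_n \to \tfrac12\int_{-1}^1 f\,\mathrm{d}x$.
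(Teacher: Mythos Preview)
Your proposal is correct and follows essentially the same route as the paper: the trace identity for $(\Pnm{}\MCos{}\Pnm{})^j$, the exact evaluation of $\tr(\Pnm{}\MCos{}^j\Pnm{})$ via the addition theorem, and the rank/norm bounds from Lemma~\ref{lemma-rankofoperators} to control $\tr(\mathrm{A}_j+\mathrm{B}_j+\mathrm{C}_j)$ are precisely what the paper does. The only difference is in the passage from monomials to bounded Riemann integrable $f$: the paper cites Freud's one-sided polynomial approximation, whereas you use moment determinacy on $[-1,1]$ together with the portmanteau theorem for $\mu$-a.e.\ continuous test functions---both arguments are standard and equally valid here.
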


\begin{proof}
According to Theorem \ref{thm:spectraldecomposition}, the functions $\psi_{k,i}$, $-n \leq k \leq n$, $1 \leq i \leq N_k$, 
form a complete set of eigenfunctions for the operator $\Pnm{} \MCos{} \Pnm{}$ on the space $\Pinm$. Thus, the functions 
$\psi_{k,i}$ are also eigenfunctions of the operators $(\Pnm{} \MCos{} \Pnm{})^j$, $j = 1,2,3, \cdots$, and we get:
\[ \tr \big( (\Pnm{} \MCos{} \Pnm{})^j\big) = \sum_{k=-n}^n \sum_{i = 1}^{\Nk} \langle (\Pnm{} \MCos{} \Pnm{})^j \psi_{k,i}, \psi_{k,i} \rangle 
= \sum_{k=-n}^n \sum_{i = 1}^{\Nk} x_{|k|,i}^{\; j}.\]
On the other hand, also the spherical harmonics $Y_l^k$, $m \leq l \leq n$, $-l \leq k \leq l$, form an orthonormal basis of $\Pinm$. Using the addition theorem of
the spherical harmonics (see \cite[Theorem 2]{Mueller}), we obtain the identity:
\begin{align*}
 \tr \big( \Pnm{} \MCos{}^j \Pnm{}\big) &= \sum_{l=m}^n \sum_{k=-l}^{l} \langle (\Pnm{} \MCos{}^j \Pnm{}) Y_l^k, Y_l^k \rangle \\
&= \frac{1}{4\pi} \sum_{l=m}^n \sum_{k=-l}^{l} \int_0^{2\pi} \int_{0}^{\pi} ( \cos \theta )^j |Y_l^k(\theta,\vph)|^2 \sin \theta d \theta d \vph \\
&= \frac{1}{4\pi} \sum_{l=m}^n \int_0^{2\pi} \int_{0}^{\pi} ( \cos \theta )^j \left( \sum_{k=-l}^{l} |Y_l^k(\theta,\vph)|^2 \right) \sin \theta d \theta d \vph \\
&= \sum_{l=m}^n \frac{2l+1}{2} \int_{0}^{\pi} ( \cos \theta )^j \sin \theta d \theta \\
&= \sum_{l=m}^n \frac{2l+1}{2} \int_{-1}^{1} x^j dx = \frac{N_n^m}{2} \int_{-1}^{1} x^j dx.
\end{align*}
Now, by Lemma \ref{lemma-rankofoperators}, we get the estimate
\begin{align*}
 & \left| \frac{1}{N_n^m}\sum_{k=-n}^n \sum_{i = 1}^{\Nk} x_{|k|,i}^{\; j} - \frac{1}{2} \int_{-1}^{1} x^j dx \right| = \frac{ |\tr \mathrm{A}_j + \tr \mathrm{B}_j + \tr \mathrm{C}_j| }{(n+1)^2-m^2}\\
 & \quad \leq \frac{|\tr A_1| + |\tr A_2| + |\tr A_3|}{(n+1)^2-m^2}\leq 4 \frac{(n+1)(j+2m-2)-\frac{j^2+1}{2}-m^2+m}{(n+1)^2-m^2}.
\end{align*}
For every fixed $j$, the term on the right hand side tends to zero as $n \to \infty$. In this way, we have shown equation \eqref{equation-weaklimit} for every polynomial on $[-1,1]$. The statement
for arbitrary bounded Riemann integrable functions $f$ on $[-1,1]$ follows from the theory of one-sided polynomial approximation developed by Freud (see \citep{Freud}).
\end{proof}

\begin{cor}
For $n \to \infty$, the distribution of the eigenvalues $x_{|k|,i}$ converges weakly to the uniform distribution on $[-1,1]$, i.e.,
for $-1 \leq a < b \leq 1$ we have
\[ \lim_{n \to \infty} \frac{\sharp \{(k,i): \; -n \leq k \leq n, \;1 \leq i \leq N_k,\; x_{|k|,i} \in [a,b]\}}{N_n^m} = \frac{b-a}{2}. \]
\end{cor}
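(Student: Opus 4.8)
The plan is to deduce this directly from the weak limit established in Theorem \ref{thm:weaklimit} by choosing the test function $f$ to be the indicator function of the interval $[a,b]$. Set
\[ f = \mathbf{1}_{[a,b]}, \qquad \text{i.e.} \quad f(x) = \begin{cases} 1, & x \in [a,b],\\ 0, & x \in [-1,1]\setminus[a,b]. \end{cases} \]
With this choice the double sum appearing on the left-hand side of \eqref{equation-weaklimit} counts exactly those eigenvalues lying in $[a,b]$, so that
\[ \sum_{k=-n}^n \sum_{i=1}^{\Nk} f(x_{|k|,i}) = \sharp\{(k,i):\ -n \leq k \leq n,\ 1 \leq i \leq N_k,\ x_{|k|,i} \in [a,b]\}, \]
while the right-hand side evaluates to $\frac{1}{2}\int_{-1}^1 \mathbf{1}_{[a,b]}(x)\,dx = \frac{b-a}{2}$, using $[a,b]\subseteq[-1,1]$.

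First I would verify that $f = \mathbf{1}_{[a,b]}$ satisfies the hypotheses of Theorem \ref{thm:weaklimit}. It is clearly bounded by $1$. For Riemann integrability I would invoke Lebesgue's criterion: $f$ is discontinuous only at the two endpoints $a$ and $b$, and this finite set has Lebesgue measure zero, so $f$ is Riemann integrable on $[-1,1]$. Hence Theorem \ref{thm:weaklimit} applies to this $f$ verbatim; its left-hand side becomes $\frac{1}{N_n^m}\,\sharp\{(k,i):\ x_{|k|,i}\in[a,b]\}$ and its right-hand side becomes $\frac{b-a}{2}$, which is precisely the asserted limit.

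The argument is thus essentially a one-line specialization, and I do not anticipate a genuine obstacle. The one point deserving care, and the reason the statement cannot be read off from a weak-convergence argument for continuous observables alone, is that the chosen test function is discontinuous at $a$ and $b$. The strength of Theorem \ref{thm:weaklimit}, which already covers all bounded Riemann integrable $f$ (via the one-sided polynomial approximation of Freud), is exactly what makes the indicator function an admissible test function and lets the weak convergence be expressed directly in terms of interval counts rather than only in terms of continuous test functions. If one wished to avoid relying on the Riemann-integrable generality, an alternative would be to sandwich $\mathbf{1}_{[a,b]}$ between continuous functions and apply Theorem \ref{thm:weaklimit} to those, letting the approximation error shrink; but since the theorem is already stated for bounded Riemann integrable functions, the direct substitution above is the cleanest route.
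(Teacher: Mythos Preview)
Your proposal is correct and is exactly the intended derivation: the paper states the corollary without proof immediately after Theorem \ref{thm:weaklimit}, since choosing $f=\mathbf{1}_{[a,b]}$ (which is bounded and Riemann integrable) in \eqref{equation-weaklimit} yields the claim directly. Your remark that the Riemann-integrable generality of Theorem \ref{thm:weaklimit} is precisely what allows the discontinuous indicator as a test function is the right observation.
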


\begin{rem}
The idea for the proofs of Lemma \ref{lemma-rankofoperators} and Theorem \ref{thm:weaklimit} is taken from the works of Simon (see \citep[Section 2.15]{Simon2011}, \cite{Simon2009}).
In these works, the corresponding weak limits are proven for orthogonal polynomials on the real line and on the unit circle. 
\end{rem}

\section{Computational considerations}

When applying the new localized basis for the analysis of functions on $\Szwei$, an important aspect is the numerical effort to compute the expansion coefficients as well 
as to reconstruct the function from the coefficients. In this section, we will show that due to the particular structure of the basis functions both can be done fast 
and efficiently from the expansion of the function in spherical harmonics. 

In a first step, we investigate the relation
\[Q = \sum_{l=m}^n \sum_{k=-l}^l c_{l,k} Y_l^k = \sum_{k=-n}^n \sum_{i=1}^{N_k} d_{k,i} \psi_{k,i}\]
between an expansion in spherical harmonics and an expansion in the new localized basis. We use the notation
\begin{align*}
\textbf{d}_{k} &:= (d_{k,1}, d_{k,2} \ldots, d_{k,N_k})^T,\quad -n \leq k \leq n,
\end{align*}
and consider the relation between the spherical harmonics $Y_l^k$ and the eigenfunctions $\psi_{k,i}$. By the spectral Theorem \ref{thm:spectraldecomposition}, we have for $-n \leq k \leq n$:
\begin{align*}
\psi_{k,i} &= \mathrm{T}_k \frac{\mathbf{v}_{|k|,i}}{\|\mathbf{v}_{|k|,i}\|_2} = \left( Y_{n-N_k+1}^k, \cdots, Y_n^k \right) \cdot \frac{\mathbf{v}_{|k|,i}}{\|\mathbf{v}_{|k|,i}\|_2},  \qquad 1 \leq i \leq N_k.
\end{align*}
Now, comparison of the two different expansions gives 
\begin{align*}
\textbf{c}_{k} &= \underbrace{\left( \frac{\mathbf{v}_{|k|,1}}{\|\mathbf{v}_{|k|,1}\|_2}, \cdots,  \frac{\mathbf{v}_{|k|,N_k}}{\|\mathbf{v}_{|k|,N_k}\|_2} \right)}_{\textbf{V}_k} \ddd_k, \quad -n \leq k \leq n.
\end{align*}
Since for fixed $k$ the eigenvectors $\mathbf{v}_{|k|,i}$, $1 \leq i \leq N_k$, of the symmetric matrices $\Jj\big({ \textstyle |k|}\big)_{n-|k|}^{m-|k|}$ and
$\Jj\big({ \textstyle |k|}\big)_{n-|k|}$, are pairwise orthogonal, the matrices $\textbf{V}_k$ are orthogonal and we get
\begin{equation} \label{eq:computationofcoefficients}
\textbf{c}_{k} = \textbf{V}_{k} \textbf{d}_{k}, \quad \textbf{d}_{k} = \textbf{V}_{k}^T \textbf{c}_{k}, \quad -n \leq k \leq n.
\end{equation}
Once all the eigenvectors of the matrices $\Jj\big({ \textstyle |k|}\big)_{n-|k|}^{m-|k|}$ and
$\Jj\big({ \textstyle |k|}\big)_{n-|k|}$ are computed, the coefficients $\textbf{d}_{k}$ of the localized 
basis can be computed from the expansion coefficients $\textbf{c}_{k}$ by 
a matrix-vector product in $(2 N_k - 1 ) N_k$ arithmetic operations. For all $2n+1$ involved blocks we then get a total
amount of $\frac13 (n-m+1)(4n^2+n(4m+5) + 3 + m - 8 m^2)$ arithmetic operations to conduct the change of basis.

Due to the particular structure of the transition matrices $\textbf{V}_{k}$ and $\textbf{V}_{k}^T$, 
the calculation of the coefficients $\textbf{d}_{k}$ can be accelerated considerably by using algorithms based
on the fast Fourier transform. To this end, we consider the entries of $\textbf{V}_{k}^T$ in more detail. 
To simplify the representations, we will only consider the case $m \leq |k| \leq n$.
In this case, we have
\begin{equation*}
\begin{pmatrix}
d_{k,1} \\ \vdots \\ d_{k,N_k}
\end{pmatrix}
=
\underbrace{\begin{pmatrix}
\kappa_{k,1} & & \\
 & \ddots & \\
 &  & \kappa_{k,N_k}
\end{pmatrix}}_{\mathbf{K}_k} 
\underbrace{\begin{pmatrix}
p_0^{(|k|)}(x_{|k|,1}) & \cdots & p_{n-|k|}^{(|k|)}(x_{|k|,1}) \\
\vdots & \ddots & \vdots \\
p_0^{(|k|)}(x_{|k|,N_k}) & \cdots & p_{n-|k|}^{(|k|)}(x_{|k|,N_k})
\end{pmatrix} }_{\mathbf{\tilde{V}}_k^T} \begin{pmatrix}
c_{1,k} \\ \vdots \\ c_{N_k,k}
\end{pmatrix}.
\end{equation*}
Now, we consider transition matrices $\mathbf{B}_k$ (see \citep[Section 1.4]{Keiner}, \citep{PottsSteidlTasche1998}) 
that describe the change of basis from ultraspherical polynomials $p_l^{(|k|)}(x)$ to Chebyshev polynomials $T_l(x) = \cos(l \arccos x)$ of the first kind, i.e.
\begin{equation*}
\underbrace{\begin{pmatrix}
T_0(x_{|k|,1}) & \cdots & T_{n-|k|}(x_{|k|,1}) \\
\vdots & \ddots & \vdots \\
T_0(x_{|k|,N_k}) & \cdots & T_{n-|k|}(x_{|k|,N_k})
\end{pmatrix} }_{\mathbf{F}_k} \mathbf{B}_k = 
\begin{pmatrix}
p_0^{(|k|)}(x_{|k|,1}) & \cdots & p_{n-|k|}^{(|k|)}(x_{|k|,1}) \\
\vdots & \ddots & \vdots \\
p_0^{(|k|)}(x_{|k|,N_k}) & \cdots & p_{n-|k|}^{(|k|)}(x_{|k|,N_k})
\end{pmatrix}
\end{equation*}
In this way, we can write the transforms in \eqref{eq:computationofcoefficients} as
\[\textbf{d}_{k} = \mathbf{K}_{k} \mathbf{F}_{k} \mathbf{B}_{k} \textbf{c}_{k} , \qquad \textbf{c}_{k} = \mathbf{B}_{k}^T \mathbf{F}_{k}^T \mathbf{K}_{k} \textbf{d}_{k}, 
\quad -n \leq k \leq n.\]
Using this representation, we can now use efficient algorithms to compute the single steps in a fast way. The matrix-vector product
$\mathbf{B}_{k} \textbf{c}_{k}$ can be computed efficiently using the fast polynomial transform described in \citep{PottsSteidlTasche1998} in $\Ord(N_k \log^2 N_k)$ 
arithmetic operations. Next, the multiplication with $\mathbf{F}_k$ can be conducted using a non-equispaced fast cosine transform
in $\Ord(N_k \log N_k)$ arithmetic operations (cf. \cite{FennPotts2005}). Finally, the application of $\mathbf{K}_{k}$ can be implemented cheaply (with $N_k$ arithmetic operations) 
by a point-wise vector-vector multiplication. In this way, 
the coefficients $\mathbf{d}_k$ can be computed from the coefficients $\mathbf{c}_k$ with a complexity of $\Ord(N_k \log^2 N_k)$. With $N_k \leq n-m+1$, we get for all $2n+1$ 
involved blocks a total complexity of 
\[ \Ord(n (n-m) \log^2 (n-m))\]
arithmetic operations for the change in the new localized basis. From the order of complexity, this corresponds to the 
complexity of fast algorithms computing the Fourier transform on $\Szwei$ (cf. \citep{KunisPotts2003}). Implementations 
of the fast polynomial transform, the fast non-equispaced cosine transform and the fast spherical Fourier transform can be found 
in the software package NFFT3 documented in \citep{KeinerKunisPotts2009} and the references therein. An example plot of a computed decomposition can be found in
Figure \ref{fig:plot2}.

\begin{figure}[H]
	\centering

	\subfigure[$f \in \Pi_{512}$]{\includegraphics[width=0.38\textwidth]{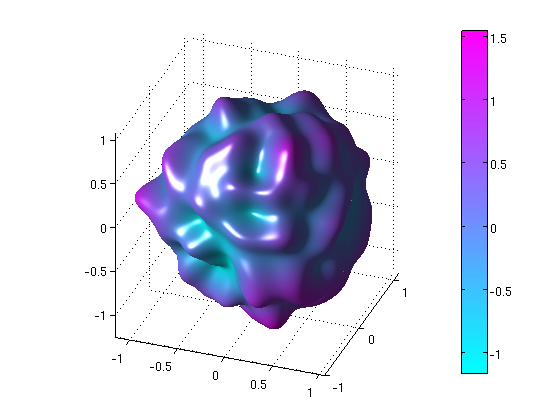}} \hspace{-12mm}
	\subfigure[$f_1 = \underset{k,i: x_{|k|,i} \in I}{\sum} d_{k,i} \psi_{k,i}$]{\includegraphics[width=0.38\textwidth]{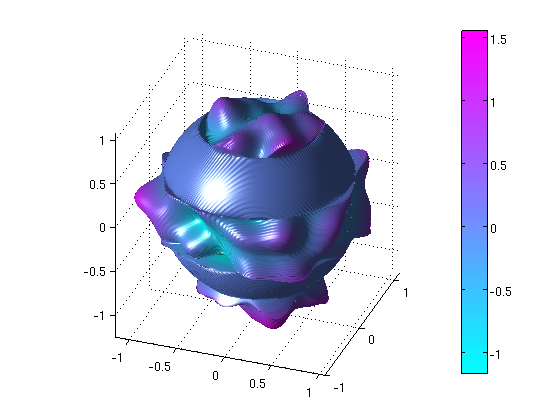}} \hspace{-12mm}
	\subfigure[$f_2 = \underset{k,i: x_{|k|,i} \notin I}{\sum} d_{k,i} \psi_{k,i}$]{\includegraphics[width=0.38\textwidth]{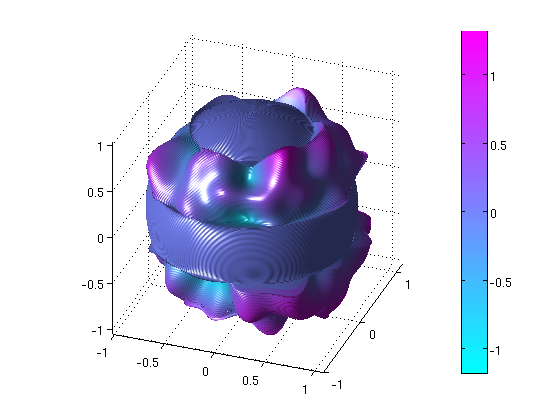}} \\

	\caption{Orthogonal decomposition of a function $f \in \Pi_{512}$ in two parts $f_1, f_2 \in \Pi_{512}$. 
	The set $I \subset [-1,1]$ is given by $[-1,-0.6] \cup [-0.2,0.2] \cup [0.6,1]$. The functions $f_1$ and $f_2$ are 
	localized in the regions $\{ p(\theta,\varphi) \in \Szwei:\; \theta \in \arccos I\}$ and $\{ p(\theta,\varphi) \in \Szwei:\; \theta \notin \arccos I\}$ of $\Szwei$, respectively.}
	\label{fig:plot2}
 \end{figure}


\begin{thebibliography}{10}

\bibitem{AlbertellaSansoSneeuw1999}
{\sc Albertella, A., Sans\`o, F., and Sneeuw, N.}
\newblock {Band-limited functions on a bounded spherical domain: The Slepian
  problem on the sphere}.
\newblock {\em J. Geod. 73}, 9 (1999), 436--447.

\bibitem{Chihara}
{\sc Chihara, T.~S.}
\newblock {\em An Introduction to Orthogonal Polynomials}.
\newblock Gordon and Breach, Science Publishers, New York, 1978.

\bibitem{ConradPrestin2002}
{\sc Conrad, M., and Prestin, J.}
\newblock Multiresolution on the {S}phere.
\newblock In {\em Summer School Lecture Notes on Principles of Multiresolution
  in Geometric Modelling, Munich, August 22-30, 2001\/} (2002), A.~Iske,
  E.~Quak, and M.~Floater, Eds., Springer, Heidelberg, pp.~165--202.

\bibitem{DaiXu}
{\sc Dai, F., and Xu, Y.}
\newblock {\em {Approximation Theory and Harmonic Analysis on Spheres and
  Balls}}.
\newblock Springer-Verlag, New York, 2013.

\bibitem{Erb2010}
{\sc Erb, W.}
\newblock Uncertainty principles on compact {R}iemannian manifolds.
\newblock {\em Appl. Comput. Harmon. Anal. 29}, 2 (2010), 182--197.

\bibitem{ErbDiss}
{\sc Erb, W.}
\newblock {\em Uncertainty Principles on Riemannian Manifolds}.
\newblock {Logos Verlag Berlin}, {dissertation}, Technical University Munich,
  2010.

\bibitem{erb2012}
{\sc Erb, W.}
\newblock Optimally space localized polynomials with applications in signal
  processing.
\newblock {\em J. Fourier Anal. Appl. 18}, 1 (2012), 45--66.

\bibitem{erb2013}
{\sc Erb, W.}
\newblock {An orthogonal polynomial analogue of the Landau-Pollak-Slepian
  time-frequency analysis}.
\newblock {\em Journal of Approximation Theory 166\/} (2013), 56--77.

\bibitem{ErbTookos2011}
{\sc Erb, W., and To\'okos, F.}
\newblock Applications of the monotonicity of extremal zeros of orthogonal
  polynomials in interlacing and optimization problems.
\newblock {\em Appl. Math. Comput. 217}, 9 (2011), 4771--4780.

\bibitem{FennPotts2005}
{\sc Fenn, M., and Potts, D.}
\newblock {Fast summation based on fast trigonometric transforms at
  nonequispaced nodes}.
\newblock {\em Numer. Linear Algebra Appl. 12\/} (2005), 161--169.

\bibitem{Freeden}
{\sc Freeden, W., Gervens, T., and Schreiner, M.}
\newblock {\em Constructive Approximation on the Sphere with Applications to
  Geomathematics}.
\newblock Oxford Science Publications, Clarendon Press, Oxford, 1998.

\bibitem{Freud}
{\sc Freud, G.}
\newblock {\em {Orthogonale Polynome}}.
\newblock {Birkh\"auser Verlag, Basel und Stuttgart}, 1969.

\bibitem{Gautschi}
{\sc Gautschi, W.}
\newblock {\em Orthogonal Polynomials: Computation and Approximation}.
\newblock Oxford University Press, Oxford, 2004.

\bibitem{GohGoodman2004-2}
{\sc Goh, S.~S., and Goodman, T.~N.}
\newblock Uncertainty principles and optimality on circles and spheres.
\newblock In {\em Advances in constructive approximation: Vanderbilt 2003.
  Proceedings of the international conference, Nashville, TN, USA, May 14--17,
  2003\/} (2004), M.~Neamtu and E.~B. Saff, Eds., Nashboro Press, Brentwood,
  TN, Modern Methods in Mathematics, pp.~207--218.

\bibitem{GrafarendAwange}
{\sc Grafarend, E.~W., and Awange, J.~L.}
\newblock {\em Applications of Linear and Nonlinear Models: Fixed Effects,
  Random Effects, and Total Least Squares}.
\newblock Springer-Verlag, Berlin-Heidelberg, 2012.

\bibitem{GruenbaumLonghiPerlstadt1982}
{\sc Grünbaum, F.~A., Longhi, L., and Perlstadt, M.}
\newblock {Differential operators commuting with finite convolution integral
  operators: Some non-Abelian examples.}
\newblock {\em SIAM J. Appl. Math. 42\/} (1982), 941--955.

\bibitem{Ismail}
{\sc Ismail, M.~E.}
\newblock {\em Classical and Quantum Orthogonal Polynomials in One Variable}.
\newblock Cambridge University Press, Cambridge, 2005.

\bibitem{Keiner}
{\sc Keiner, J.}
\newblock {\em Fast polynomial transform}.
\newblock {Logos Verlag Berlin}, {dissertation}, University of Luebeck, 2011.

\bibitem{KeinerKunisPotts2009}
{\sc Keiner, J., Kunis, S., and Potts, D.}
\newblock {NFFT 3 - a software library for various nonequispaced fast Fourier
  transforms}.
\newblock {\em ACM Trans. Math. Software 36\/} (2009), 1--30.

\bibitem{KunisPotts2003}
{\sc Kunis, S., and Potts, D.}
\newblock {Fast spherical Fourier algorithms}.
\newblock {\em J. Comput. Appl. Math. 161\/} (2003), 75 -- 98.

\bibitem{Fernandez2007}
{\sc La\'in~Fern\'andez, N.}
\newblock Optimally space-localized band-limited wavelets on
  $\mathbb{S}^{q-1}$.
\newblock {\em J. Comput. Appl. Math. 199}, 1 (2007), 68--79.

\bibitem{LandauPollak1961}
{\sc Landau, H., and Pollak, H.}
\newblock {Prolate spheroidal wave functions, Fourier analysis and uncertainty,
  II.}
\newblock {\em Bell System Tech. J. 40\/} (1961), 65--84.

\bibitem{LandauPollak1962}
{\sc Landau, H., and Pollak, H.}
\newblock {Prolate spheroidal wave functions, Fourier analysis and uncertainty,
  III: The dimension of the space of essentially time- and band-limited
  signals.}
\newblock {\em Bell System Tech. J. 41\/} (1962), 1295--1336.

\bibitem{LandauWidom1980}
{\sc Landau, H., and Widom, H.}
\newblock {Eigenvalue distribution of time and frequency limiting.}
\newblock {\em J. Math. Anal. Appl. 77\/} (1980), 469--481.

\bibitem{Michel2011}
{\sc Michel, V.}
\newblock Optimally localized approximate identities on the 2-sphere.
\newblock {\em Numerical Functional Analysis and Optimization 32\/} (2011),
  877--903.

\bibitem{Michel}
{\sc Michel, V.}
\newblock {\em {Lectures on Constructive Approximation - Fourier, Spline, and
  Wavelet Methods on the Real Line, the Sphere, and the Ball}}.
\newblock Birkhäuser Verlag, Boston, 2012.

\bibitem{Miranian2004}
{\sc Miranian, L.}
\newblock {Slepian functions on the sphere, generalized Gaussian quadrature
  rule.}
\newblock {\em Inverse Problems 20}, 3 (2004), 877--892.

\bibitem{Mueller}
{\sc Müller, C.}
\newblock {\em Spherical Harmonics}.
\newblock Lecture Notes in Mathematics 17, Springer-Verlag,
  Berlin-Heidelberg-New York, 1966.

\bibitem{NarcovichWard1996}
{\sc Narcowich, F.~J., and Ward, J.~D.}
\newblock Nonstationary wavelets on the $m$-sphere for scattered data.
\newblock {\em Appl. Comput. Harmon. Anal. 3}, 4 (1996), 324--336.

\bibitem{Papoulis}
{\sc Papoulis, A.}
\newblock {\em Probability, Random Variables, and Stochastic Processes},
  third~ed.
\newblock McGraw-Hill, New York, 1991.

\bibitem{Polyakov}
{\sc Polyakov, A.}
\newblock {\em {Local Basis Expansions for Linear Inverse Problems}}.
\newblock Ph.d. thesis, New York University, 2002.

\bibitem{PottsSteidlTasche1998}
{\sc Potts, D., Steidl, G., and Tasche, M.}
\newblock Fast algorithms for discrete polynomial transforms.
\newblock {\em Math. Comput. 67\/} (1998), 1577--1590.

\bibitem{ReedSimon1}
{\sc Reed, M., and Simon, B.}
\newblock {\em {Methods of modern mathematical physics. Vol. I: Functional
  analysis}}.
\newblock {Academic Press, New York}, 1980.

\bibitem{RoeslerVoit1997}
{\sc R{\"o}sler, M., and Voit, M.}
\newblock An uncertainty principle for ultraspherical expansions.
\newblock {\em J.~Math.~Anal.~Appl. 209\/} (1997), 624--634.

\bibitem{RudinFunctionalAnalysis}
{\sc Rudin, W.}
\newblock {\em Functional Analysis}.
\newblock MacGraw-Hill, New York, 1973.

\bibitem{Simon2009}
{\sc Simon, B.}
\newblock Weak convergence of cd kernels and applications.
\newblock {\em Duke Mathematical Journal 146}, 2 (2009), 305 -- 330.

\bibitem{Simon2011}
{\sc Simon, B.}
\newblock {\em {Szeg\H o's theorem and its descendants. Spectral theory for
  $L^2$ perturbations of orthogonal polynomials.}}
\newblock {Princeton University Press, Princeton, NJ}, 2011.

\bibitem{Simons2012}
{\sc Simons, F.~J.}
\newblock {Slepian functions and their use in signal estimation and spectral
  analysis}.
\newblock In {\em Handbook of Geomathematics\/} (2010), {W. Freeden, M.Z.
  Nashed and T. Sonar}, Ed., {Berlin: Springer}, pp.~893--920.

\bibitem{SimonsDahlenWieczorek2006}
{\sc Simons, F.~J., Dahlen, F., and Wieczorek, M.~A.}
\newblock {Spatiospectral concentration on a sphere}.
\newblock {\em SIAM Rev. 48}, 3 (2006), 504--536.

\bibitem{Slepian1964}
{\sc Slepian, D.}
\newblock {Prolate spheroidal wave functions, Fourier analysis and uncertainty,
  IV: Extensions to many dimensions; generalized prolate spheroidal functions}.
\newblock {\em Bell System Tech. J. 43\/} (1964), 3009--3057.

\bibitem{Slepian1978}
{\sc Slepian, D.}
\newblock {Prolate spheroidal wave functions, Fourier analysis, and
  uncertainty, V: The discrete case}.
\newblock {\em Bell System Tech. J. 57\/} (1978), 1371--1430.

\bibitem{SlepianPollak1961}
{\sc Slepian, D., and Pollak, H.~O.}
\newblock {Prolate spheroidal wave functions, Fourier analysis and uncertainty,
  I}.
\newblock {\em Bell System Tech. J. 40\/} (1961), 43--63.

\bibitem{Szegoe}
{\sc {Szeg\H o}, G.}
\newblock {\em Orthogonal Polynomials}.
\newblock American Mathematical Society, Providence, Rhode Island, 1939.

\bibitem{Werner}
{\sc Werner, D.}
\newblock {\em Funktionalanalysis}, seventh~ed.
\newblock Springer-Verlag, Berlin, 2011.

\bibitem{WieczorekSimons2005}
{\sc Wieczorek, M.~A., and Simons, F.~J.}
\newblock {Localized spectral analysis on the sphere}.
\newblock {\em Geophys. J. Int. 162\/} (2005), 655--675.

\end{thebibliography}
\end{document}